\numberwithin{algorithm}{section}
\newtheorem{remark}{Remark}
\newtheorem{example}{Example}
\newcommand{\bu}{\mathbf u}
\begin{document}

\title{A positivity preserving iterative method for finding the ground states of saturable nonlinear Schr\"odinger equations}
\author{Ching-Sung Liu }

\maketitle

\begin{abstract}
In this paper, we propose an iterative method to compute the positive ground states  of saturable nonlinear Schr\"odinger equations.  A discretization of the saturable nonlinear Schr\"odinger equation leads to a nonlinear algebraic eigenvalue problem (NAEP).  For any initial positive vector, we prove that this method converges globally with a locally quadratic convergence rate to a positive solution of NAEP.  During the iteration process, the method requires the selection of a positive parameter $\theta_k$ in the $k$th iteration, and generates a positive vector sequence approximating the eigenvector of NAEP and a scalar sequence approximating the corresponding eigenvalue. We also present a halving procedure to determine the parameters $\theta_k$, starting with $\theta_k=1$ for each iteration,  such that the scalar sequence is strictly monotonic increasing. This method can thus be used to illustrate the existence of positive ground states of saturable nonlinear Schr\"odinger equations. Numerical experiments are provided to support the theoretical results.

\end{abstract}


\begin{keywords}
Schr\"odinger equations, Saturable nonlinearity, Ground states, $M$-matrix, quadratic convergence,positivity preserving
\end{keywords}

\begin{AMS}
65F15, 65F50
\end{AMS}

\pagestyle{myheadings} \thispagestyle{plain}


\section{Introduction}
The nonlinear Schr\"odinger (NLS) equation \cite{R92} is a nonlinear variation of the Schr\"odinger equation and is a general model in nonlinear science and mathematics. Such an equation can be expressed as follows:
\begin{equation}
i\frac{\partial \phi}{\partial z}+\triangle \phi+\Gamma  f(|\phi |^2)\phi=0 \text{ for some constant } \Gamma \in  \mathbb{R},\label{NLS}
\end{equation}
where $\phi = \phi(x, z): \mathbb{R}^2 \times \mathbb{R}^{+} \rightarrow \mathbb{C}$,  the function $f$ denotes the nonlinearity and $i$ is the imaginary unit.
A NLS equation is called a saturable NLS equation \cite{C82,L17} if the nonlinear function $f(s)=1-1/(a+s^2)$, that is, 
\begin{equation}
i\frac{\partial \phi}{\partial z}=-\triangle \phi+\Gamma \left (1-\frac{1}{a(x)+| \phi |^2}\right )\phi , \text{ for } \Gamma >0, \label{NLS1}
\end{equation}
where $a(x) >0$ is a bounded function. A saturable NLS equation is of interest  in several applications \cite{G97,K92,K65,M13,M68,M07}, and has been extensively studied in the past thirty years. In many application areas, one is interested in finding the ground state vector of equation (\ref{NLS1}). The ground state of equation (\ref{NLS1}) is defined as the minimizer of the energy function, which is determined by the following constrained optimization problem \cite{C82,L17}:
\begin{equation}
m(\Gamma,I)=\inf \{ H(u)\text{ }|\text{ } u \in H^1(\mathbb{R}^2), \int_{\mathbb{R}^2}u^2=1 \}, \label{GroundS}
\end{equation}
where
\begin{equation*}
H(u)=\int_{\mathbb{R}^2}|\nabla u|^2+\Gamma\left[ u^2- \ln\left( 1+\frac{u^2}{a(x)}\right) \right]dx.
\end{equation*}
Therefore, the associated Euler-Lagrange equation of (\ref{GroundS}) is as follows:
\begin{equation}\label{eq:NSLE}
-\Delta u+\Gamma \left (1-\frac{1}{a(x)+u^2}\right ) u=\lambda u,
\end{equation}
where $a(x)）> 0, \int_{-\infty}^{\infty}u^2(x)dx=1$, $(\lambda, u)$ is the eigenpair. In general, the eigenfunction $u(x)$ describes the probability distribution of finding a particle in a particular region in space. Therefore, the existence of positive solutions $u(x)$ \cite{L17} and the problem of computing these solutions has attracted much attention in recent years. Here we consider the finite-difference discretization of the nonlinear eigenvalue problem (\ref{eq:NSLE}) with Dirichlet boundary conditions, and the discretization gives a nonlinear algebraic eigenvalue problem (NAEP)
\begin{equation}\label{dnep}
A\mathbf{u}+\Gamma \mathrm{diag} \left (\mathbf{e}-\frac{\mathbf{e}}{\mathbf{%
a}+\mathbf{u}^{[2]}}\right ) \mathbf{u}=\lambda \mathbf{u}, \quad \mathbf{u}%
^T\mathbf{u}=1, \end{equation}
where $\mathbf{a}>0, \Gamma >0$, $\mathbf{u}=[u_{1},u_{2},\ldots ,u_{n}]^{T}\in \mathbb{R}^{n},$ $\mathbf{u}^{[2]}=[u_{1}^{2},u_{2}^{2},\ldots ,u_{n}^{2}]^{T},$ $A$ is an irreducible nonsingular $M$-matrix and $\mathbf{e}=[1,\ldots ,1]^{T}$.
We aim to provide a structure-preserving algorithm with fast convergence rate for computing positive eigenvectors $\mathbf{u}_{\ast}$ and eigenvalues $\lambda_{\ast}$of NAEP (\ref{dnep}) and giving a detailed convergence analysis.

  In many applications, the positivity structure of the approximate solutions is important; if the approximations lose positivity structure, then they may be meaningless and unexplained. Therefore, in this paper, we propose a positivity preserving iteration for nonlinear algebraic eigenvalue problems (\ref{dnep})  by combining the idea of Newton's method with the idea of the Noda iteration \cite{N71}, called the Newton-Noda iteration (NNI). NNI is a Newton iterative method with a new type of full Newton steps, it has the advantage that no line-searches are needed, and naturally preserves the strict positivity of the target eigenvector $\mathbf{u}_{\ast}$ in its approximations at all iterations.  We also present a halving procedure to determine the parameters $\theta_k$, starting with $\theta_k=1$ for each iteration,  such that the sequence approximating target eigenvalue $\lambda_{\ast}$ is strictly monotonic increasing and bounded, and thus its global convergence is guaranteed. Another advantage of NNI is that it converges quadratically and computes the desired eigenpair correctly for any positive initial vector.

The rest of this paper is organized as follows. In Section 2, we present a Newton-Noda iteration. In Section 3, we  prove some basic properties for Newton-Noda iteration. Section 4 addresses the global convergence and the local convergence rate of NNI. In Section 5, we provide numerical examples to verify the theoretical results and the performance of NNI.
Some concluding remarks are given in the last section.

Throughout this paper, we use the bold face letters to denote a vector and use the $2$-norm for vectors and matrices. The superscript $T$ denotes the transpose of a vector or matrix, and we use $\mathbf{v}^{(i)}$ to represent the $i$th element of a vector $\mathbf{v} $. $\mathbf{v}^{[m]}$ denotes element-by-element powers, i.e., $\mathbf{v}^{[m]}=[v_{1}^{m},v_{2}^{m},\ldots ,v_{n}^{m}]^{T}.$ A real matrix $A=\left[ A_{ij}\right] \in \mathbb{R}^{n\times k}$ is called nonnegative (positive) if $A_{ij}\geq 0$ $(A_{ij}>0)$. For real matrices $A$ and $B$ of the same size, we write $A\geq B$ ($A>B$) if $A-B$ is nonnegative (positive). A real square matrix $A$ is called a $Z$-matrix if all its off-diagonal elements are nonpositive. A matrix $A$ is called a M-matrix if it is a Z-matrix with $A^{-1} \geq 0$. A matrix $A$ is called reducible \cite{BPl94,HJo85} if
there exists a nonempty proper index subset $S\subset \left\{
1,2,\ldots ,n\right\} $ such that%
\begin{equation*}
A_{ij}=0,\text{ }\forall \  i\in S,\text{ }\forall
\  j\notin S.
\end{equation*}%
If $A$ is not reducible, then we call $A$ irreducible.
For a pair of positive vectors $\mathbf{v}$ and $\mathbf{w}$, define
\begin{equation*}
\max \left( \frac{\mathbf{w}}{\mathbf{v}}\right) =\underset{i}{\max }\left(
\frac{\mathbf{w}^{(i)}}{\mathbf{v}^{(i)}}\right) ,\text{ \ }\min \left(
\frac{\mathbf{w}}{\mathbf{v}}\right) =\underset{i}{\min }\left( \frac{%
\mathbf{w}^{(i)}}{\mathbf{v}^{(i)}}\right) .
\end{equation*}

\section{The Newton-Noda iteration}

In this section, we will present a Newton-Noda iteration (NNI) for computing a positive eigenvector $\mathbf{u}_{\ast}$ of NAEP (\ref{dnep}), and then we prove some basic properties of NNI in Section 3, which will be used to establish its convergence theory in Section 4.

First,  NAEP (\ref{dnep}) can be simplified as follows:
\begin{equation*}
\mathcal{A}(\mathbf{u})\mathbf{u}=\lambda \mathbf{u},
\end{equation*}
where
\begin{equation*}
\mathcal{A}(\mathbf{u})=A+\Gamma \mathrm{diag} \left (\mathbf{e}-\frac{\mathbf{e}}{%
\mathbf{a}+\mathbf{u}^{[2]}}\right )
\end{equation*}
and $\mathrm{diag}\left ( \ast \right )$ returns a square diagonal matrix with the elements of vector $\ast$
 on the main diagonal. 
We define two vector-valued functions $\mathbf{r}:$ $\mathbb{R}_{+}^{n+1}%
\mathbb{\rightarrow R}^{n}$ and 
$F:$ $\mathbb{R}_{+}^{n+1}\mathbb{\rightarrow R}^{n+1}$ as follows:
\begin{equation}
\mathbf{r}(\mathbf{u,}\lambda )=\mathcal{A}(\mathbf{u})\mathbf{u}-\lambda \mathbf{u}, \quad \mathbf{F}(\mathbf{u},\lambda )=\left[
\begin{array}{c}
-\mathbf{r}(\mathbf{u,}\lambda ) \\
\frac{1}{2}\left( 1-\mathbf{u}^{T}\mathbf{u}\right)%
\end{array}%
\right].   \label{eq:Fx}
\end{equation}
The Fr\'echet derivative of $F$ is given by
\begin{equation}\label{eq:Fre}
F^{\prime}(\mathbf{u},\lambda)=\left[
\begin{array}{cc}
J({\mathbf{u}}) & -\mathbf{u} \\
-\mathbf{u}^{T} & 0%
\end{array}
\right ],
\end{equation}
where
\begin{equation*}
J(\mathbf{u})=A+(\Gamma-\lambda)I-\Gamma \mathrm{diag} \left (\frac{\mathbf{a%
}-\mathbf{u}^{[2]}}{(\mathbf{a}+\mathbf{u}^{[2]})^{[2]}}\right ).
\end{equation*}

Next, we consider using Newton's method to solve the equation $\mathbf{F}(\mathbf{u,}\lambda )=0$. Given an approximation $(\mathbf{u}_k,\widehat{\lambda }_{k})$, Newton's method produces the next approximation $(\mathbf{u}_{k+1},\widehat{\lambda }_{k+1})$ as follows:

\begin{align}
\left[
\begin{array}{cc}
J(\mathbf{u}_k) & -\mathbf{u}_k \\
-\mathbf{u}_k^{T} & 0%
\end{array}%
\right] \left[
\begin{array}{c}
\Delta_{k} \\
\delta _{k}%
\end{array}%
\right]&=-\left[
\begin{array}{c}
\mathbf{r}(\mathbf{u}_k,\widehat{\lambda }_{k}) \\
\frac{1}{2}\left( \mathbf{u}_k^{T}\mathbf{u}_k-1\right)%
\end{array}%
\right],  \label{eq:step1} \\
\mathbf{u}_{k+1}& =\mathbf{u}_k\,+\Delta_{k},
\label{eq:step2} \\
\widehat{\lambda }_{k+1}& =\widehat{\lambda }_{k}+\delta _{k}.
\label{eq:step3}
\end{align}
From the first equation of (\ref{eq:step1}), we have
\begin{eqnarray*}
J(\mathbf{u}_k)(\Delta_k+\mathbf{u}_k) &=&J(\mathbf{u}_k)\Delta_k+J(\mathbf{u}_k)\mathbf{u}_k\\
&=&\delta_k \mathbf{u}_k-\mathbf{r}(\mathbf{u}_k,\widehat{\lambda }_{k})+ J(\mathbf{u}_k)\mathbf{u}_k \\
&=&\delta_k \mathbf{u}_k-(\mathcal{A}(\mathbf{u}_k)-\widehat{\lambda }_{k} I)\mathbf{u}_k\\&+& (\mathcal{A}(\mathbf{u}_k)-\widehat{\lambda }_{k}  I)\mathbf{u}_k+2\Gamma \mathrm{diag} \left (\frac{%
\mathbf{u}_k^{[2]}}{(\mathbf{a}+\mathbf{u}_k^{[2]})^{[2]}}\right )\mathbf{u}_k \\
&=& \delta_k \mathbf{u}_k+2\Gamma \mathrm{diag} \left (\frac{%
\mathbf{u}_k^{[2]}}{(\mathbf{a}+\mathbf{u}_k^{[2]})^{[2]}}\right )\mathbf{u}_k.
\end{eqnarray*}%
Hence, 
\begin{equation}
\mathbf{u}_{k+1}=J(\mathbf{u}_k)^{-1}\left (\delta_k \mathbf{u}_k+2\Gamma \mathrm{diag} \left (\frac{%
\mathbf{u}_k^{[2]}}{(\mathbf{a}+\mathbf{u}_k^{[2]})^{[2]}}\right )\mathbf{u}_k\right ).
\label{eq: linearsys0}
\end{equation}%

Since $\mathbf{u}_k$ is going to approximate the positive eigenvector of NAEP, we will also require $\mathbf{u}_k>0$. However,
we cannot guarantee $\mathbf{u}_{k+1}>0$ in \eqref{eq: linearsys0}
unless we have 
\begin{equation*}
\delta_k>0, J(\mathbf{u}_k)^{-1}\geq0.
\end{equation*}
What is needed here is that $J(\mathbf{u}_k)$ is a nonsingular M-matrix.
For $\mathbf{u}_k>0$, we suggests taking
\begin{equation}\label{eq:lamk}
\lambda_k =\min \left( \frac{\mathcal{A}(\mathbf{u}_k)\mathbf{u}_k}{\mathbf{u}_k}\right),
\end{equation}
which is precisely the idea of the Noda iteration \cite{N71}.
This implies that the $Z$-matrix $\mathcal{A}(\mathbf{u}_k)-\lambda_k I$ is such that
$(\mathcal{A}(\mathbf{u}_k)-\lambda_k I)\mathbf{u}_k\ge 0$. Thus $\mathcal{A}(\mathbf{u}_k)-\lambda_k I$
is a nonsingular $M$-matrix when $(\mathbf{u}_k, \lambda_k)$ is not an
eigenpair, and is a singular $M$-matrix when $(\mathbf{u}_k, \lambda_k)$ is an
eigenpair. 
Since
\begin{equation}\label{JandA}
J(\mathbf{u}_k)-(\mathcal{A}(\mathbf{u}_k)-\lambda_k I)=2\Gamma \mathrm{diag} \left (\frac{%
\mathbf{u}_k^{[2]}}{(\mathbf{a}+\mathbf{u}_k^{[2]})^{[2]}}\right ),
\end{equation}
we have $J(\mathbf{u}_k)\mathbf{u}_k>0$. Thus $J(\mathbf{u}_k)$ is a nonsingular $M$-matrix. 
Based on (\ref{eq:step1}), (\ref{eq:step2}) and (\ref{eq:lamk}), we can present NNI as Algorithm \ref{alg1}.

\begin{algorithm}
\begin{enumerate}
  \item   Given $\bu_0 > 0$ with $\Vert \bu_0\Vert =1$, ${\lambda}_{0}= \min \left( \frac{\mathcal{A}(\mathbf{u}_{0}){\bf u}_0}{\mathbf{u}_{0}}\right)$ and tol $>0$.
  \item   {\bf for} $k =0,1,2,\dots$
  \item   \quad Solve the linear system $F'(\mathbf{u_k},\lambda_k ) \left [\begin{array}{c}
\Delta_k\\
\delta_k
\end{array}\right ]=-   F(\mathbf{u_k},\lambda_k )        $.
  \item \quad Choose a scalar $\theta_k>0$.
  \item   \quad Compute the vector ${\mathbf{w}}_{k+1} =\mathbf{u}_{k}\,+\theta_k \Delta_k$.
   \item   \quad Normalize the vector $\bu_{k+1}= {\mathbf{w}}_{k+1}/\Vert {\mathbf{w}}_{k+1}\Vert$.
  \item   \quad Compute  ${\lambda }_{k+1} =\min \left( \frac{\mathcal{A}(\mathbf{u}_{k+1}){\bf u}_{k+1}}{\mathbf{u}_{k+1}}\right)$.
  \item   {\bf until} convergence: $\Vert \mathcal{A}(\mathbf{u}_k)-\underline{\lambda}_k\mathbf{u}_k\Vert < $tol.
\end{enumerate}
\caption{Newton-Noda iteration (NNI)}
\label{alg1}
\end{algorithm}
In what follows, we will prove the positivity of $\mathbf{u}_{k}$ and give a strategy for choosing $\theta _{k}$. These results will show that Algorithm~\ref{alg1} is a positivity preserving algorithm.

\subsection{Positivity of $\mathbf{u}_{k}$}
Suppose that $\left\{\mathbf{u}_{k}, {\lambda }_{k}\right\}$ is generated by
Algorithm \ref{alg1}. We now prove that the parameter $\theta_k \in (0,1]$ in Algorithm \ref{alg1} naturally preserves
the strict positivity of $\mathbf{u}_{k}$ at all iterations.

For any vector $\mathbf{u}>0$, from (\ref{eq:Fre}), it follows that
\begin{equation}\label{eq:Fnon}
F^{\prime}(\mathbf{u}, \lambda)=\left[
\begin{array}{cc}
I & 0 \\
-\mathbf{u}^{T} (J(\mathbf{u}))^{-1} & 1%
\end{array}
\right ] \left[
\begin{array}{cc}
J({\mathbf{u}}) & -\mathbf{u} \\
0 & -\mathbf{u}^{T} (J(\mathbf{u}))^{-1} \mathbf{u}%
\end{array}
\right ]
\end{equation}
is nonsingular and
\begin{align}  \label{eq1.1}
(F^{\prime}(\mathbf{u}, \lambda))^{-1}=\left[
\begin{array}{cc}
(J(\mathbf{u}))^{-1} -\frac{(J(\mathbf{u}))^{-1} \mathbf{u} \mathbf{u}^T(J(%
\mathbf{u}))^{-1}}{\mathbf{u}^{T}(J(\mathbf{u}))^{-1} \mathbf{u}} & -\frac{
(J(\mathbf{u}))^{-1} \mathbf{u} }{\mathbf{u}^{T} (J(\mathbf{u}))^{-1}
\mathbf{u}} \\
-\frac{ \mathbf{u}^T (J(\mathbf{u}))^{-1} }{\mathbf{u}^{T} (J(\mathbf{u}%
))^{-1} \mathbf{u}} & -\frac{ 1 }{\mathbf{u}^{T} (J(\mathbf{u}))^{-1}
\mathbf{u}}%
\end{array}
\right ].
\end{align}

\begin{lemma}
\label{lem1} Given $\mathbf{a},\Gamma >0$. Suppose that $\lambda_k\in \mathbb{R}$ and $\mathbf{u}_k>0$
with $\|\mathbf{u}_k\|=1$ such that $\mathbf{r}(\mathbf{u}_k,\lambda_k )\geqslant 0$. Then
\begin{align}  \label{eq1.2}
\frac{1}{2}(1+ \mathbf{u}_k^T \mathbf{u}_k)-\Gamma \mathbf{u}_k^T (J(\mathbf{%
u}_k))^{-1} \mathrm{diag} \left (\frac{2\mathbf{u}_k^{[2]}}{(\mathbf{a}+%
\mathbf{u}_k^{[2]})^{[2]}}\right )\mathbf{u}_k\geqslant 0.
\end{align}
Moreover, the equality holds if and only if $\mathbf{r}(\mathbf{u}_k,\lambda_k )= 0$.
\end{lemma}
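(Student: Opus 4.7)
The plan is to exploit the identity (\ref{JandA}) already derived in the excerpt, which rewrites $J(\mathbf{u}_k)\mathbf{u}_k$ in terms of the residual $\mathbf{r}(\mathbf{u}_k,\lambda_k)$ and a strictly positive vector, and then to multiply on the left by $\mathbf{u}_k^T J(\mathbf{u}_k)^{-1}$.

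First I would verify that $J(\mathbf{u}_k)$ is a nonsingular irreducible $M$-matrix so that $J(\mathbf{u}_k)^{-1}$ exists and is nonnegative (in fact entrywise positive). Since the off-diagonal entries of $J(\mathbf{u}_k)$ come only from $A$, it is a $Z$-matrix and is irreducible. Using (\ref{JandA}) with an arbitrary $\lambda_k$ (the identity is purely algebraic and does not rely on the Noda choice),
\begin{equation*}
J(\mathbf{u}_k)\mathbf{u}_k = \mathbf{r}(\mathbf{u}_k,\lambda_k) + 2\Gamma\,\mathrm{diag}\!\left(\frac{\mathbf{u}_k^{[2]}}{(\mathbf{a}+\mathbf{u}_k^{[2]})^{[2]}}\right)\mathbf{u}_k,
\end{equation*}
and the hypothesis $\mathbf{r}(\mathbf{u}_k,\lambda_k)\ge 0$ combined with $\mathbf{u}_k>0$ gives $J(\mathbf{u}_k)\mathbf{u}_k>0$. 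Thus $J(\mathbf{u}_k)$ is a nonsingular (irreducible) $M$-matrix, so $J(\mathbf{u}_k)^{-1}>0$ componentwise.

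Next I would apply $J(\mathbf{u}_k)^{-1}$ to the identity above to express $\mathbf{u}_k$ as a sum of two nonnegative vectors, then take the inner product with $\mathbf{u}_k$:
\begin{equation*}
\mathbf{u}_k^T\mathbf{u}_k = \mathbf{u}_k^T J(\mathbf{u}_k)^{-1}\mathbf{r}(\mathbf{u}_k,\lambda_k) + 2\Gamma\,\mathbf{u}_k^T J(\mathbf{u}_k)^{-1}\,\mathrm{diag}\!\left(\frac{\mathbf{u}_k^{[2]}}{(\mathbf{a}+\mathbf{u}_k^{[2]})^{[2]}}\right)\mathbf{u}_k.
\end{equation*}
Since $\|\mathbf{u}_k\|=1$, the left-hand side equals $1=\tfrac{1}{2}(1+\mathbf{u}_k^T\mathbf{u}_k)$, so rearranging yields exactly
\begin{equation*}
\tfrac{1}{2}(1+\mathbf{u}_k^T\mathbf{u}_k) - \Gamma\,\mathbf{u}_k^T J(\mathbf{u}_k)^{-1}\,\mathrm{diag}\!\left(\frac{2\mathbf{u}_k^{[2]}}{(\mathbf{a}+\mathbf{u}_k^{[2]})^{[2]}}\right)\mathbf{u}_k = \mathbf{u}_k^T J(\mathbf{u}_k)^{-1}\mathbf{r}(\mathbf{u}_k,\lambda_k).
\end{equation*}
The right-hand side is a product of a nonnegative row vector $\mathbf{u}_k^T J(\mathbf{u}_k)^{-1}\ge 0$ with the nonnegative vector $\mathbf{r}(\mathbf{u}_k,\lambda_k)\ge 0$, which proves (\ref{eq1.2}).

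For the equality case, I would use the strict positivity of $J(\mathbf{u}_k)^{-1}$ (from irreducibility) together with $\mathbf{u}_k>0$: the row vector $\mathbf{u}_k^T J(\mathbf{u}_k)^{-1}$ is entrywise strictly positive, so its inner product with the nonnegative $\mathbf{r}(\mathbf{u}_k,\lambda_k)$ vanishes if and only if $\mathbf{r}(\mathbf{u}_k,\lambda_k)=0$; the converse is immediate. The only step that requires care is justifying $J(\mathbf{u}_k)^{-1}>0$ entrywise, which I would settle by appealing to the fact that $A$, and hence the $Z$-matrix $J(\mathbf{u}_k)$, is irreducible, and a nonsingular irreducible $M$-matrix has a strictly positive inverse.
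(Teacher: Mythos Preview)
Your proof is correct and follows essentially the same route as the paper's own proof: both exploit the identity $J(\mathbf{u}_k)\mathbf{u}_k=\mathbf{r}(\mathbf{u}_k,\lambda_k)+2\Gamma\,\mathrm{diag}\bigl(\mathbf{u}_k^{[2]}/(\mathbf{a}+\mathbf{u}_k^{[2]})^{[2]}\bigr)\mathbf{u}_k$, apply $J(\mathbf{u}_k)^{-1}$, take the inner product with $\mathbf{u}_k$, and use $\|\mathbf{u}_k\|=1$ together with $\mathbf{u}_k^T J(\mathbf{u}_k)^{-1}>0$ to conclude. The only cosmetic difference is that the paper writes the intermediate result as $\tfrac{1}{2}(1-\mathbf{u}_k^T\mathbf{u}_k)+\mathbf{u}_k^T J(\mathbf{u}_k)^{-1}\mathbf{r}(\mathbf{u}_k,\lambda_k)$ before invoking the normalization, whereas you substitute $\mathbf{u}_k^T\mathbf{u}_k=1$ a step earlier.
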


\begin{proof}
Since $\mathcal{A}({\bf u}_k){\bf u}_k-\lambda_k{\bf u}_k\geqslant 0$, $J({\bf u}_k)=(\mathcal{A}({\bf u}_k)-\lambda_k I)+\Gamma  {\rm diag} \left (\frac{2{\bf u}_k^{[2]}}{({\bf a}+{\bf u}_k^{[2]})^{[2]}}\right )$ is nonsingular $M$-matrix. Then we have
\begin{align*}
\Gamma(J({\bf u}_k))^{-1}{\rm diag} \left (\frac{2{\bf u}_k^{[2]}}{({\bf a}+{\bf u}_k^{[2]})^{[2]}}\right ){\bf u}_k=\left(I-(J({\bf u}_k))^{-1}(\mathcal{A}({\bf u}_k)-\lambda_k I)\right){\bf u}_k.
\end{align*}
Hence,
\begin{align*}
\frac{1}{2}&(1+ {\bf u}_k^T {\bf u}_k)-\Gamma {\bf u}_k^T (J({\bf u}_k))^{-1}  {\rm diag} \left (\frac{2{\bf u}_k^{[2]}}{({\bf a}+{\bf u}_k^{[2]})^{[2]}}\right ){\bf u}_k\\
&=\frac{1}{2}(1+ {\bf u}_k^T {\bf u}_k)-{\bf u}_k^T {\bf u}_k+{\bf u}_k^T (J({\bf u}_k))^{-1}(\mathcal{A}({\bf u}_k){\bf u}_k-\lambda_k {\bf u}_k)\\
&=\frac{1}{2}(1- {\bf u}_k^T {\bf u}_k)+{\bf u}_k^T (J({\bf u}_k))^{-1}\mathbf{r}(\mathbf{u}_k,\lambda_k ).
\end{align*}
Since $\|{\bf u}_k\|=1$ and $\mathbf{r}(\mathbf{u}_k,\lambda_k )\geqslant 0$, \eqref{eq1.2} holds by using ${\bf u}_k^T (J({\bf u}_k))^{-1}>0$. It is easily seen that the equality of \eqref{eq1.2} holds if and only if $\mathbf{r}(\mathbf{u}_k,\lambda_k )=0$.
\end{proof}

\begin{theorem}\label{positivethm}
Given $\mathbf{a},\Gamma >0$. Assume $\left\{\mathbf{u}_{k}, {\lambda }_{k}\right\}$ is
generated by Algorithm \ref{alg1}. If $\theta_k \in (0,1]$, then $\mathbf{u}_{k}>0$ for all $k$.
\end{theorem}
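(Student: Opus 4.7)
The plan is to proceed by induction on $k$. The base case $\mathbf{u}_0 > 0$ is given by step~1 of Algorithm~\ref{alg1}, and because the normalization in step~6 preserves strict positivity, it suffices, under the inductive hypothesis $\mathbf{u}_k > 0$ with $\|\mathbf{u}_k\| = 1$, to show that the unnormalized update $\mathbf{w}_{k+1} = \mathbf{u}_k + \theta_k \Delta_k$ is strictly positive for every $\theta_k \in (0,1]$. The key rewrite is the convex-combination identity
\begin{equation*}
\mathbf{w}_{k+1} = (1-\theta_k)\mathbf{u}_k + \theta_k (\mathbf{u}_k + \Delta_k),
\end{equation*}
which, combined with $\mathbf{u}_k > 0$, reduces the whole argument to showing $\mathbf{u}_k + \Delta_k > 0$.

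For this I would invoke the explicit formula already derived in the text,
\begin{equation*}
\mathbf{u}_k + \Delta_k = J(\mathbf{u}_k)^{-1}\left(\delta_k \mathbf{u}_k + 2\Gamma\,\mathrm{diag}\!\left(\frac{\mathbf{u}_k^{[2]}}{(\mathbf{a}+\mathbf{u}_k^{[2]})^{[2]}}\right)\mathbf{u}_k\right).
\end{equation*}
The discussion preceding Algorithm~\ref{alg1} already establishes that the min-ratio choice of $\lambda_k$ forces $\mathbf{r}(\mathbf{u}_k,\lambda_k)\ge 0$ and, via (\ref{JandA}), that $J(\mathbf{u}_k)$ is a nonsingular $M$-matrix; in particular $J(\mathbf{u}_k)^{-1}\ge 0$ with no zero row. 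The second summand inside the parentheses is coordinate-wise strictly positive because $\mathbf{u}_k>0$ and $\mathbf{a},\Gamma>0$. Consequently, as soon as we know $\delta_k\ge 0$, the bracketed vector is strictly positive and applying $J(\mathbf{u}_k)^{-1}$ yields $\mathbf{u}_k+\Delta_k>0$.

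The remaining step --- the only real one --- is to prove $\delta_k\ge 0$, and here Lemma~\ref{lem1} is exactly what is needed. I would take the $\mathbf{u}_k^T$ inner product of the formula above and use $\mathbf{u}_k^T\Delta_k=0$ (which is the second block equation of (\ref{eq:step1}) combined with $\|\mathbf{u}_k\|=1$) to obtain
\begin{equation*}
1 \;=\; \delta_k\,\mathbf{u}_k^T J(\mathbf{u}_k)^{-1}\mathbf{u}_k \;+\; 2\Gamma\,\mathbf{u}_k^T J(\mathbf{u}_k)^{-1}\,\mathrm{diag}\!\left(\frac{\mathbf{u}_k^{[2]}}{(\mathbf{a}+\mathbf{u}_k^{[2]})^{[2]}}\right)\mathbf{u}_k .
\end{equation*}
Lemma~\ref{lem1}, specialized to $\|\mathbf{u}_k\|=1$, says precisely that the second summand on the right does not exceed $1$; hence $\delta_k\,\mathbf{u}_k^T J(\mathbf{u}_k)^{-1}\mathbf{u}_k\ge 0$, and since $\mathbf{u}_k^T J(\mathbf{u}_k)^{-1}\mathbf{u}_k>0$ we conclude $\delta_k\ge 0$. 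I do not anticipate any genuine obstacle; the substantive content has already been packaged into the $M$-matrix structure of $J(\mathbf{u}_k)$ and into Lemma~\ref{lem1}, and the rest is bookkeeping.
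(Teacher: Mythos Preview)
Your proposal is correct and follows essentially the same approach as the paper: induction, the convex-combination reduction to $\mathbf{u}_k+\Delta_k>0$, and then positivity via the $M$-matrix structure of $J(\mathbf{u}_k)$ together with Lemma~\ref{lem1}. The only cosmetic difference is that the paper re-derives $\mathbf{u}_k+\Delta_k$ through the block inverse \eqref{eq1.1} to obtain a decomposition in which the Lemma~\ref{lem1} quantity appears directly as a scalar coefficient, whereas you use the simpler formula \eqref{eq: linearsys0} and extract $\delta_k\ge 0$ by taking the $\mathbf{u}_k^T$ inner product---but that coefficient in the paper's formula \emph{is} $\delta_k$, so the two routes coincide.
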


\begin{proof}
Since ${\bf u}_0>0$, by mathematical induction, it suffices to show that if ${\bf u}_k>0$ then ${\bf u}_{k+1}>0$. Suppose that ${\bf u}_k>0$, it follows from the step 3 of Algorithm  \ref{alg1} that
\begin{align*}
F'(\mathbf{u_k},\lambda_k ) \left [\begin{array}{c}
{\bf u}_k+\Delta_k\\
\delta_k
\end{array}\right ]&=-\left[
\begin{array}{c}
\mathcal{A}({\bf u}_k){\bf u}_k-\lambda_k{\bf u}_k  \\
\frac{1}{2}\left( 1-{\bf u}_k^{T}{\bf u}_k\right )
\end{array}
\right] +\left [\begin{array}{c}
 J({\bf u}_k){\bf u}_k\\
-{\bf u}_k^{T}{\bf u}_k
\end{array}\right ]\\
&=\left [\begin{array}{c}
\Gamma  {\rm diag} \left (\frac{2{\bf u}_k^{[2]}}{({\bf a}+{\bf u}_k^{[2]})^{[2]}}\right ){\bf u}_k\\
-\frac{1}{2}\left( 1+{\bf u}_k^{T}{\bf u}_k\right )
\end{array}\right ].
\end{align*}
By \eqref{eq1.1}, we have
\begin{align}\label{eq:udelta}
{\bf u}_k+\Delta_k =&\Gamma\left(I-\frac{(J({\bf u}_k))^{-1} {\bf u}_k {\bf u}_k^T}{\mathbf{u}_k^{T}(J({\bf u}_k))^{-1} {\bf u}_k} \right) (J({\bf u}_k))^{-1} \notag {\rm diag} \left (\frac{2{\bf u}_k^{[2]}}{({\bf a}+{\bf u}_k^{[2]})^{[2]}}\right ){\bf u}_k\\ \notag
&+\frac{1+ {\bf u}_k^T {\bf u}_k}{2\mathbf{u}_k^{T}(J({\bf u}_k))^{-1} {\bf u}_k} (J({\bf u}_k))^{-1}{\bf u}_k\\ \notag
=&\Gamma(J({\bf u}_k))^{-1}  {\rm diag} \left (\frac{2{\bf u}_k^{[2]}}{({\bf a}+{\bf u}_k^{[2]})^{[2]}}\right ){\bf u}_k\\ 
&+\frac{\frac{1}{2}(1+ {\bf u}_k^T {\bf u}_k)-\Gamma {\bf u}_k^T (J({\bf u}_k))^{-1}  {\rm diag} \left (\frac{2{\bf u}_k^{[2]}}{({\bf a}+{\bf u}_k^{[2]})^{[2]}}\right ){\bf u}_k}{\mathbf{u}_k^{T}(J({\bf u}_k))^{-1} {\bf u}_k} (J({\bf u}_k))^{-1}{\bf u}_k.
\end{align}
Since $\Gamma>0$, ${\bf u}_k>0$, $J({\bf u}_k)$ is a nonsingular $M$-matrix and $\mathbf{u}_k^{T}(J({\bf u}_k))^{-1} {\bf u}_k>0$, it follows from Lemma \ref{lem1} that ${\bf u}_k+\Delta_k>0$.
Therefore, ${\bf w}_{k+1}={\bf u}_k+\theta_k \Delta_k>0$ if $0< \theta_k \le 1 $, and hence,  $\bu_{k+1}= {\mathbf{w}}_{k+1}/\Vert {\mathbf{w}}_{k+1}\Vert>0$.
\end{proof}

\begin{remark}
$\text{ }$
\begin{enumerate}
\item[(i)] $\mathbf{u}_k^T\Delta_k=0$: From \eqref{eq:udelta}, it is easily seen that $\mathbf{u}_k^T\Delta_k=\frac{1}{2}(1-\mathbf{u}_k^T \mathbf{u}_k)=0.$

\item[(ii)] $\delta _{k}\geqslant 0$: From the step 3 of Algorithm \ref{alg1}
and using \eqref{eq1.1}, we have
\begin{align*}
\delta _{k}& =\frac{1}{\mathbf{u}_{k}^{T}(J(\mathbf{u}_{k}))^{-1}\mathbf{u}%
_{k}}\mathbf{u}_{k}^{T}(J(\mathbf{u}_{k}))^{-1}\mathbf{r}(\mathbf{u}_k,\lambda_k )+\frac{\frac{1}{2}(1-\mathbf{u}_{k}^{T}%
\mathbf{u}_{k})}{\mathbf{u}_{k}^{T}(J(\mathbf{u}_{k}))^{-1}\mathbf{u}_{k}} \\
& =\frac{1}{\mathbf{u}_{k}^{T}(J(\mathbf{u}_{k}))^{-1}\mathbf{u}_{k}}\mathbf{%
u}_{k}^{T}(J(\mathbf{u}_{k}))^{-1}\mathbf{r}(\mathbf{u}_k,\lambda_k )\geqslant 0.
\end{align*}%
\end{enumerate}
\end{remark}

\begin{lemma}
\label{equiThm}If $\delta_k$, $\Delta_k$ and $\mathbf{u}_{k}$ are generated by
Algorithm~\ref{alg1}, then the following statements are equivalent:%
\begin{equation*}
\text{{\rm (i) }}\delta _{k}=0;\quad \text{ {\rm (ii) }}\mathbf{r}(\mathbf{u}_k,\lambda_k )=0; \quad \text{{\rm (iii) }}\Delta_k =0.
\end{equation*}
\end{lemma}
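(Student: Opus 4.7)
The plan is to establish the three equivalences by a cyclic chain (ii) $\Rightarrow$ (i) $\Rightarrow$ (iii) $\Rightarrow$ (ii), leveraging the closed-form expression for $\delta_k$ derived in the preceding remark together with the $M$-matrix structure of $J(\mathbf{u}_k)$.

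The key workhorse is the identity
\begin{equation*}
\delta_k = \frac{\mathbf{u}_k^T (J(\mathbf{u}_k))^{-1} \mathbf{r}(\mathbf{u}_k,\lambda_k)}{\mathbf{u}_k^T (J(\mathbf{u}_k))^{-1} \mathbf{u}_k},
\end{equation*}
which is recorded in part (ii) of the preceding remark (the second piece of the numerator vanishes because $\|\mathbf{u}_k\|=1$). From this formula, (ii) $\Rightarrow$ (i) is immediate. For (i) $\Rightarrow$ (iii), I would first note that since $A$ is an irreducible nonsingular $M$-matrix, so is $J(\mathbf{u}_k)$, and therefore $(J(\mathbf{u}_k))^{-1} > 0$ entrywise. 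Combined with $\mathbf{u}_k > 0$ and $\mathbf{r}(\mathbf{u}_k,\lambda_k)\geq 0$ (which holds by the definition \eqref{eq:lamk} of $\lambda_k$), the numerator is a sum of nonnegative terms, so $\delta_k = 0$ forces $\mathbf{r}(\mathbf{u}_k,\lambda_k)=0$; then the first block of the Newton system \eqref{eq:step1} reduces to $J(\mathbf{u}_k)\Delta_k = 0$, and nonsingularity of $J(\mathbf{u}_k)$ yields $\Delta_k = 0$.

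The remaining direction (iii) $\Rightarrow$ (ii) is the step I expect to take the most care. Substituting $\Delta_k = 0$ into the first block of \eqref{eq:step1} gives $\mathbf{r}(\mathbf{u}_k,\lambda_k) = \delta_k \mathbf{u}_k$. Here I would exploit the Noda-type choice \eqref{eq:lamk}: by definition of $\lambda_k$ as the minimum of $\mathcal{A}(\mathbf{u}_k)\mathbf{u}_k/\mathbf{u}_k$, there exists an index $i$ at which this minimum is attained, so that $\mathbf{r}(\mathbf{u}_k,\lambda_k)^{(i)} = 0$. Since $\mathbf{u}_k^{(i)} > 0$, the equation $\delta_k \mathbf{u}_k^{(i)} = 0$ forces $\delta_k = 0$, and therefore $\mathbf{r}(\mathbf{u}_k,\lambda_k) = \delta_k \mathbf{u}_k = 0$, establishing (ii) (and also (i) en route).

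The main subtlety lies in this last implication, because it is the only place where one must appeal to the specific definition of $\lambda_k$ rather than purely to the algebra of the Newton system; without exploiting that $\mathbf{r}$ has at least one vanishing coordinate, one cannot rule out the possibility $\mathbf{r} = \delta_k \mathbf{u}_k$ with $\delta_k > 0$. All other steps are short algebraic consequences of \eqref{eq:step1}, \eqref{eq1.1}, and the irreducible $M$-matrix property already used to prove Theorem~\ref{positivethm}.
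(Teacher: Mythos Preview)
Your proposal is correct and follows essentially the same approach as the paper's proof: both use the closed-form expression for $\delta_k$ together with strict positivity of $\mathbf{u}_k^T(J(\mathbf{u}_k))^{-1}$ to link (i) and (ii), the nonsingularity of $J(\mathbf{u}_k)$ (equivalently of $F'(\mathbf{u}_k,\lambda_k)$) to pass from (ii) to (iii), and the Noda choice of $\lambda_k$ to close the loop from (iii). The only cosmetic difference is the direction of the cycle---the paper runs (i)$\Rightarrow$(ii)$\Rightarrow$(iii)$\Rightarrow$(i), whereas you run (ii)$\Rightarrow$(i)$\Rightarrow$(iii)$\Rightarrow$(ii)---and your (i)$\Rightarrow$(iii) step in fact passes through (ii), but the underlying ingredients and their use are identical.
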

\begin{proof}
From the step 3 of Algorithm~\ref{alg1}, we have
\begin{equation*}
F'(\mathbf{u_k},\lambda_k ) \left [\begin{array}{c}
\Delta_k\\
\delta_k
\end{array}\right ]=-\left[
\begin{array}{c}
\mathbf{r}(\mathbf{u}_k,\lambda_k )  \\
0
\end{array}
\right].
\end{equation*}

(i)$\Rightarrow $(ii): From (ii) of Remark 1, we get $\delta _{k}=0$ if and only if $\mathbf{r}(\mathbf{u}_k,\lambda_k )=0$.

(ii)$\Rightarrow $(iii): Since $\mathbf{r}(\mathbf{u}_k,\lambda_k )=0$ and $F'(\mathbf{u_k},\lambda_k )$ is a nonsingular matrix, we have $\Delta_k=0$ and $\delta_k=0$.

(iii) $\Rightarrow $ (i): 
If $\Delta_k =0$, then
\begin{equation*}
F'(\mathbf{u_k},\lambda_k ) \left [\begin{array}{c}
0\\
\delta_k
\end{array}\right ]=-\left[
\begin{array}{c}
\mathbf{r}(\mathbf{u}_k,\lambda_k )  \\
0
\end{array}
\right],
\end{equation*}
and it follows
$$-\delta_k \mathbf{u}_k=-\mathbf{r}(\mathbf{u}_k,\lambda_k )=-\left(\mathcal{A}({\bf u}_k){\bf u}_k-\lambda_k{\bf u}_k\right),$$
which implies
$$\mathcal{A}({\bf u}_k){\bf u}_k=\left(\lambda_k+\delta_k\right){\bf u}_k.$$
Then 
$${\lambda }_{k} +\delta_k=\min \left( \frac{\mathcal{A}(\mathbf{u}_{k}){\bf u}_{k}}{\mathbf{u}_{k}}\right)=\lambda_k,$$
which means $\delta_k=0$. 
\end{proof}

\subsection{The strategy for choosing $\theta_k$}
In this section, we would like to choose $\theta_k \in (0,1]$ such that the sequence $\left\{ \lambda _{k}\right\} $ is strictly increasing and bounded above. 

\begin{lemma}
\label{posiyk}
Given
a unit vector $\mathbf{u}_{k}>0$ and $\theta_k \in (0,1]$, then  
\begin{equation}
\lambda _{k+1}=\lambda _{k}+\min \left( \frac{\mathbf{h%
}_{k}(\theta _{k})}{\mathbf{u}_{k+1}}\right),
\label{eq: recurrLam}
\end{equation}%
where $\mathbf{h}_{k}(\theta_k )=\mathbf{r}(\mathbf{u}_{k+1},\lambda _{k})$.
Moreover, $\mathbf{h}_{k}(\theta _{k})$ can be also expressed in the form
\begin{equation}
\mathbf{h}_{k}(\theta_k )=\frac{1-\theta_k}{\Vert {\mathbf{w}}_{k+1}\Vert}\mathbf{r}(\mathbf{u}_k,\lambda_k)+\frac{\theta_k \delta_k}{\Vert {\mathbf{w}}_{k+1}\Vert} \mathbf{u}_k+\mathbf{R}(\theta_k \Delta_k) \label{eq:ftheta}
\end{equation}
where $\Vert\mathbf{R}(\theta_k \Delta_k)\Vert \leq M\Vert \theta_k \Delta_k\Vert^2$.
\end{lemma}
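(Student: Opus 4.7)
The plan is to prove (\ref{eq: recurrLam}) first and then derive (\ref{eq:ftheta}) together with its remainder bound from a first-order Taylor expansion of $\mathbf{r}(\cdot,\lambda_k)$ about $\mathbf{u}_k$.

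For (\ref{eq: recurrLam}), I would start from step 7 of Algorithm~\ref{alg1}, which sets $\lambda_{k+1}=\min(\mathcal{A}(\mathbf{u}_{k+1})\mathbf{u}_{k+1}/\mathbf{u}_{k+1})$. Because Theorem~\ref{positivethm} guarantees $\mathbf{u}_{k+1}>0$, the scalar $\lambda_k$ can be pulled inside the componentwise minimum (using the weights $\mathbf{u}_{k+1}>0$), turning the numerator into $\mathcal{A}(\mathbf{u}_{k+1})\mathbf{u}_{k+1}-\lambda_k \mathbf{u}_{k+1}=\mathbf{h}_k(\theta_k)$, which gives the identity.

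For (\ref{eq:ftheta}), the key observation is that the Jacobian of $\mathbf{u}\mapsto \mathbf{r}(\mathbf{u},\lambda_k)$ is exactly $J(\mathbf{u})$; this is verified by differentiating the rational term $\Gamma\mathbf{u}/(\mathbf{a}+\mathbf{u}^{[2]})$ entrywise and matching with the formula defining $J$. Taylor's theorem then yields
\[
\mathbf{h}_k(\theta_k)=\mathbf{r}(\mathbf{u}_k,\lambda_k)+J(\mathbf{u}_k)(\mathbf{u}_{k+1}-\mathbf{u}_k)+R_1,
\]
with $\|R_1\|\le C_1\|\mathbf{u}_{k+1}-\mathbf{u}_k\|^2$. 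Writing $s=\|\mathbf{w}_{k+1}\|$, I have $\mathbf{u}_{k+1}-\mathbf{u}_k=\bigl(\theta_k\Delta_k+(1-s)\mathbf{u}_k\bigr)/s$, and substituting $J(\mathbf{u}_k)\Delta_k=\delta_k\mathbf{u}_k-\mathbf{r}(\mathbf{u}_k,\lambda_k)$ from the Newton equation~(\ref{eq:step1}) isolates the leading terms $\tfrac{1-\theta_k}{s}\mathbf{r}(\mathbf{u}_k,\lambda_k)+\tfrac{\theta_k\delta_k}{s}\mathbf{u}_k$, leaving
\[
\mathbf{R}(\theta_k\Delta_k)=\tfrac{s-1}{s}\bigl(\mathbf{r}(\mathbf{u}_k,\lambda_k)-J(\mathbf{u}_k)\mathbf{u}_k\bigr)+R_1
\]
as the remainder to be controlled.

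Finally, for the quadratic bound I would invoke Remark~1(i), namely $\mathbf{u}_k^{T}\Delta_k=0$, together with $\|\mathbf{u}_k\|=1$, to obtain $s^{2}=1+\theta_k^{2}\|\Delta_k\|^{2}$; hence $s\ge 1$ and $s-1=\theta_k^{2}\|\Delta_k\|^{2}/(s+1)=O(\|\theta_k\Delta_k\|^{2})$. Since $\mathbf{r}(\mathbf{u}_k,\lambda_k)-J(\mathbf{u}_k)\mathbf{u}_k$ is bounded and $\|\mathbf{u}_{k+1}-\mathbf{u}_k\|\le C\|\theta_k\Delta_k\|$, these estimates combine with the Taylor bound on $R_1$ to give $\|\mathbf{R}(\theta_k\Delta_k)\|\le M\|\theta_k\Delta_k\|^{2}$. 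I expect the main obstacle to be exactly this bookkeeping: the normalization cross term $(1-s)\mathbf{u}_k/s$ looks first order in $\theta_k\Delta_k$ at a glance, and only the orthogonality $\mathbf{u}_k^{T}\Delta_k=0$ forces it to be quadratic, which is what makes the whole error estimate collapse to the required order.
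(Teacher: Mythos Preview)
Your proposal is correct and follows essentially the same route as the paper: both prove \eqref{eq: recurrLam} directly from the definition of $\lambda_{k+1}$, then apply a first-order Taylor expansion of $\mathbf r(\cdot,\lambda_k)$ at $\mathbf u_k$, substitute the Newton relation $J(\mathbf u_k)\Delta_k=\delta_k\mathbf u_k-\mathbf r(\mathbf u_k,\lambda_k)$, and use $\mathbf u_k^{T}\Delta_k=0$ to show the normalization term $(s-1)/s$ is quadratic in $\theta_k\Delta_k$. The only cosmetic difference is that the paper explicitly evaluates $\mathbf r(\mathbf u_k,\lambda_k)-J(\mathbf u_k)\mathbf u_k=-2\Gamma\,\mathbf u_k^{[3]}/(\mathbf a+\mathbf u_k^{[2]})^{[2]}$ via \eqref{JandA} and extracts a concrete constant, whereas you keep that vector abstract and simply note it is uniformly bounded on the unit sphere; either way the remainder estimate goes through.
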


\begin{proof}
By Theorem~\ref{positivethm}, we know that $\mathbf{u}_{k}>0$ for all $k$.
\begin{equation*}
\lambda _{k+1}-\lambda _{k}=\min \left( \frac{\mathcal{A}(\mathbf{u}_{k+1}){\bf u}_{k+1}}{\mathbf{u}_{k+1}}\right)-\lambda _{k}=\min \left( \frac{\mathbf{h}_{k}(\theta _{k})}{\mathbf{u}_{k+1}}\right),
\end{equation*}%
where $\mathbf{h}_{k}(\theta_k )=\mathbf{r}(\mathbf{u}_{k+1},\lambda _{k})$.
By Taylor’s theorem, we have 
\begin{eqnarray}
\mathbf{r}(\mathbf{u}_{k+1},\lambda_k)&=&\mathbf{r}(\mathbf{u}_{k},\lambda_k)+J(\mathbf{u}_{k})(\mathbf{u}_{k+1}-\mathbf{u}_{k})+\mathbf{E}_k  \notag\\
&=&\mathbf{r}(\mathbf{u}_{k},\lambda_k)+J(\mathbf{u}_{k})(\frac{\mathbf{u}_{k}+\theta_k \Delta_k}{\Vert {\mathbf{w}}_{k+1}\Vert}-\mathbf{u}_{k})+\mathbf{E}_k  \notag\\
&=&\mathbf{r}(\mathbf{u}_{k},\lambda_k)+\left(\frac{1}{\Vert {\mathbf{w}}_{k+1}\Vert}-1\right)J(\mathbf{u}_{k})\mathbf{u}_{k}+\frac{\theta_k }{\Vert {\mathbf{w}}_{k+1}\Vert}J(\mathbf{u}_{k})\Delta_k+\mathbf{E}_k\notag\\
&=&\mathbf{r}(\mathbf{u}_{k},\lambda_k)+\left(\frac{1}{\Vert {\mathbf{w}}_{k+1}\Vert}-1\right)\left[\mathbf{r}(\mathbf{u}_{k},\lambda_k)+2\Gamma \mathrm{diag} \left (\frac{%
\mathbf{u}_k^{[2]}}{(\mathbf{a}+\mathbf{u}_k^{[2]})^{[2]}}\right )\mathbf{u}_{k}\right] \notag\\
&+&\frac{\theta_k }{\Vert {\mathbf{w}}_{k+1}\Vert}\left[\delta_k\mathbf{u}_{k}- \mathbf{r}(\mathbf{u}_{k},\lambda_k)\right]+\mathbf{E}_k \notag\\
&=& \frac{1-\theta_k}{\Vert {\mathbf{w}}_{k+1}\Vert}\mathbf{r}(\mathbf{u}_k,\lambda_k)+\frac{\theta_k \delta_k\mathbf{u}_k}{\Vert {\mathbf{w}}_{k+1}\Vert} 
+\frac{1-\Vert {\mathbf{w}}_{k+1}\Vert}{\Vert {\mathbf{w}}_{k+1}\Vert}\left[\frac{2\Gamma\mathbf{u}_k^{[3]}}{(\mathbf{a}+\mathbf{u}_k^{[2]})^{[2]}} \right]+\mathbf{E}_k, \label{eq:ruk}
\end{eqnarray}
where $\Vert\mathbf{E}_k\Vert \leq M_1\Vert \mathbf{u}_{k+1}-\mathbf{u}_{k}\Vert^2$.

Since $\mathbf{u}_{k}^{T}\Delta_k=0$ from Remark 1, we have $\Vert {\mathbf{w}}_{k+1}\Vert= \sqrt{1+\Vert \theta_k \Delta_k\Vert^2 }.$
Hence, the third term in the right-hand side of (\ref{eq:ruk}) is bounded by
%
\begin{eqnarray}
\Vert \frac{1-\Vert {\mathbf{w}}_{k+1}\Vert}{\Vert {\mathbf{w}}_{k+1}\Vert} \left[\frac{2\Gamma\mathbf{u}_k^{[3]}}{(\mathbf{a}+\mathbf{u}_k^{[2]})^{[2]}} \right]\Vert
&\le &\Vert \frac{1-\sqrt{1+\Vert \theta_k \Delta_k\Vert^2 }}{\sqrt{1+\Vert \theta_k \Delta_k\Vert^2 }} \Vert \Vert\frac{2\Gamma\mathbf{u}_k^{[3]}}{\mathbf{a}^{[2]}+2\mathbf{a}\mathbf{u}_k^{[2]}+\mathbf{u}_k^{[4]}} \Vert  \notag \\
&\leq& \frac{\Gamma}{2\min{(\mathbf{a})}}\Vert \theta_k \Delta_k\Vert^2, \label{eq:wk1}
\end{eqnarray}
and the upper bound of $\Vert\mathbf{E}_k\Vert $ can be re-estimated as follows:
\begin{eqnarray}
\Vert\mathbf{E}_k\Vert 
&\leq &M_1\Vert \mathbf{u}_{k+1}-\mathbf{u}_{k}\Vert^2 \notag \\
&=&M_1\Vert \left(\frac{1}{\Vert {\mathbf{w}}_{k+1}\Vert}-1\right)\mathbf{u}_{k}+\frac{\theta_k }{\Vert {\mathbf{w}}_{k+1}\Vert}\Delta_k\Vert^2 \notag \\
&\leq &M_1\Vert \left(\frac{1}{\Vert {\mathbf{w}}_{k+1}\Vert}-1\right)\mathbf{u}_{k}\Vert^2+\frac{M_1 }{\Vert {\mathbf{w}}_{k+1}\Vert}\Vert \theta_k\Delta_k\Vert^2  \notag \\
&\leq & \left(\frac{M_1}{2}+M_1\right)\Vert \theta_k \Delta_k\Vert^2. \label{eq:resi}
\end{eqnarray}
From the above relation (\ref{eq:ruk})-(\ref{eq:resi}), we have
\begin{equation*}
\mathbf{h}_{k}(\theta_k )=\frac{1-\theta_k}{\Vert {\mathbf{w}}_{k+1}\Vert}\mathbf{r}(\mathbf{u}_k,\lambda_k)+\frac{\theta_k \delta_k}{\Vert {\mathbf{w}}_{k+1}\Vert} \mathbf{u}_k+\mathbf{R}(\theta_k \Delta_k),
\end{equation*}
where
\begin{equation*}
\mathbf{R}(\theta_k \Delta_k)=\frac{1-\Vert {\mathbf{w}}_{k+1}\Vert}{\Vert {\mathbf{w}}_{k+1}\Vert}\left[\frac{2\Gamma\mathbf{u}_k^{[3]}}{(\mathbf{a}+\mathbf{u}_k^{[2]})^{[2]}} \right]+\mathbf{E}_k
\end{equation*}
with $\Vert\mathbf{R}(\theta_k \Delta_k)\Vert \leq M\Vert \theta_k \Delta_k\Vert^2$ and $M=\frac{\Gamma}{2 \min(\mathbf{a})}+\frac{M_1}{2}+M_1$.
\end{proof}

We next show that $\lambda _{k}$ is strictly increasing and bounded above for suitable $\theta_k$, unless  $\mathbf{u}_{k}$ is an eigenvector of NAEP for some $k$, in which case NNI terminates with $\lambda _{k}$.

\begin{theorem}
\label{monotone}Suppose $A$ be an irreducible M-matrix and $%
\eta >0$ be a fixed constant. Given a unit vector $\mathbf{u}_{k}>0
$, suppose $\mathbf{u}_{k}\not=\mathbf{u}_{\ast}$ and $\theta _{k}$ in Algorithm \ref{alg1} satisfies
\begin{equation}
\theta _{k}=\left\{
\begin{array}{cl}
1 & \text{if }\mathbf{h}_{k}(1)\geq \frac{\delta_k\mathbf{u}_{k}}{(1+\eta)\Vert {\mathbf{w}}_{k+1}\Vert }\text{;} \\
\eta _{k} & \text{otherwise,}%
\end{array}%
\right.  \label{eq:lowbdtheta}
\end{equation}%
where for each $k$ with $\mathbf{h}_{k}(1)< \frac{\delta_k\mathbf{u}_{k}}{(1+\eta)\Vert {\mathbf{w}}_{k+1}\Vert }$,
$$
\eta_{k}=\frac{\eta \delta_k \min \left( \mathbf{u}_{k}\right)}{(1+\eta )M\Vert {\mathbf{w}}_{k+1}\Vert \left\Vert \Delta_{k}\right\Vert^2 }. $$
Then $0<\eta_k< 1$ whenever it is defined, and
\begin{equation}
\lambda _{k}<\lambda _{k+1}<\Vert A\Vert+(1+n)\Gamma .
\label{eq:monolam}
\end{equation}

\end{theorem}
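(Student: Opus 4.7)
The plan is to invoke Lemma~\ref{posiyk}, which reduces $\lambda_{k+1}>\lambda_k$ to showing $\mathbf{h}_k(\theta_k)>0$ componentwise (since $\mathbf{u}_{k+1}>0$ by Theorem~\ref{positivethm}). As preparation I would note that $\mathbf{u}_k\neq \mathbf{u}_\ast$ combined with Lemma~\ref{equiThm} gives $\mathbf{r}(\mathbf{u}_k,\lambda_k)\neq 0$, $\Delta_k\neq 0$, and $\delta_k\neq 0$; Remark~1(ii) then strengthens the last assertion to $\delta_k>0$. Combined with $\min(\mathbf{u}_k)>0$ and $\|\Delta_k\|>0$, this immediately yields $\eta_k>0$ whenever $\eta_k$ is defined.

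For $\eta_k<1$, I would specialize the formula for $\mathbf{h}_k$ in Lemma~\ref{posiyk} at $\theta_k=1$, which collapses to
\[\mathbf{h}_k(1)=\frac{\delta_k}{\|\mathbf{w}_{k+1}\|}\mathbf{u}_k+\mathbf{R}(\Delta_k).\]
Since the first-branch test fails, there is an index $i$ at which $\mathbf{h}_k(1)^{(i)}<\frac{\delta_k\mathbf{u}_k^{(i)}}{(1+\eta)\|\mathbf{w}_{k+1}\|}$, whence
\[\mathbf{R}(\Delta_k)^{(i)}<-\frac{\eta\delta_k\mathbf{u}_k^{(i)}}{(1+\eta)\|\mathbf{w}_{k+1}\|}\leq -\frac{\eta\delta_k\min(\mathbf{u}_k)}{(1+\eta)\|\mathbf{w}_{k+1}\|}.\]
Pairing this with $\|\mathbf{R}(\Delta_k)\|\leq M\|\Delta_k\|^2$ and rearranging produces precisely the inequality $\eta_k<1$.

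For strict monotonicity, the branch $\theta_k=1$ is immediate: the defining condition gives $\mathbf{h}_k(1)\geq \frac{\delta_k\mathbf{u}_k}{(1+\eta)\|\mathbf{w}_{k+1}\|}>0$ componentwise. In the branch $\theta_k=\eta_k$, I would discard the nonnegative $(1-\eta_k)\mathbf{r}(\mathbf{u}_k,\lambda_k)/\|\mathbf{w}_{k+1}\|$ contribution and estimate componentwise
\[\mathbf{h}_k(\eta_k)^{(i)}\geq \frac{\eta_k\delta_k\mathbf{u}_k^{(i)}}{\|\mathbf{w}_{k+1}\|}-M\eta_k^2\|\Delta_k\|^2.\]
The defining equation for $\eta_k$ can be rewritten as $M\eta_k\|\Delta_k\|^2=\frac{\eta\delta_k\min(\mathbf{u}_k)}{(1+\eta)\|\mathbf{w}_{k+1}\|}$, and substituting collapses the right-hand side to $\frac{\eta_k\delta_k\min(\mathbf{u}_k)}{(1+\eta)\|\mathbf{w}_{k+1}\|}>0$, completing the positivity.

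The upper bound $\lambda_{k+1}<\|A\|+(1+n)\Gamma$ is the routine part: choose any $i$, use $\lambda_{k+1}\leq (\mathcal{A}(\mathbf{u}_{k+1})\mathbf{u}_{k+1})^{(i)}/\mathbf{u}_{k+1}^{(i)}$, and exploit the $Z$-matrix structure of $A$ (off-diagonals $\leq 0$) together with elementary bounds on the diagonal nonlinearity and on $\|\mathbf{u}_{k+1}\|=1$. The main obstacle lies not in any single estimate but in coordinating the constant $M$ across two opposing roles: the remainder bound must be sharp enough that the failure of the full Newton test forces $M\|\Delta_k\|^2$ to exceed a linear quantity in $\delta_k\min(\mathbf{u}_k)$, yet the same $M$ must leave the halved step's linear term $\eta_k\delta_k\mathbf{u}_k/\|\mathbf{w}_{k+1}\|$ dominant over the quadratic $M\eta_k^2\|\Delta_k\|^2$. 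The factor $(1+\eta)^{-1}$ in the branching criterion is what makes both directions strict simultaneously.
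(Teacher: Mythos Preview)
Your proposal is correct and follows essentially the same approach as the paper: both reduce monotonicity to $\mathbf{h}_k(\theta_k)>0$ via Lemma~\ref{posiyk}, split the linear term $\frac{\theta_k\delta_k\mathbf{u}_k}{\|\mathbf{w}_{k+1}\|}$ using the factor $\frac{1}{1+\eta}+\frac{\eta}{1+\eta}$, and balance the $\eta$-portion against the quadratic remainder $M\theta_k^2\|\Delta_k\|^2$. The only cosmetic differences are that the paper proves $\eta_k<1$ by contrapositive (showing $\eta_k\ge 1$ would force the first branch) rather than your direct argument, and bounds $\lambda_{k+1}$ via the Rayleigh-type quantity $\mathbf{u}_k^T\mathcal{A}(\mathbf{u}_k)\mathbf{u}_k$ rather than your componentwise $Z$-matrix estimate.
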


\begin{proof}
By Lemma \ref{posiyk}, we have
\begin{equation*}
\lambda _{k+1}=\lambda _{k}+\min \left( \frac{\mathbf{h%
}_{k}(\theta _{k})}{\mathbf{u}_{k+1}}\right) .
\end{equation*}%
We need to prove $\mathbf{h}_{k}(\theta _{k})>0.$

From (\ref{eq:ftheta}) and $\Vert\mathbf{R}(\theta_k \Delta_k)\Vert \leq M\Vert \theta_k \Delta_k\Vert^2$, we have
\begin{eqnarray}
\mathbf{h}_{k}(\theta )&=&\frac{\theta \delta_k\mathbf{u}_{k}}{(1+\eta) \Vert {\mathbf{w}}_{k+1}\Vert}+\frac{\theta \eta \delta_k\mathbf{u}_{k}}{(1+\eta) \Vert {\mathbf{w}}_{k+1}\Vert} +\frac{1-\theta}{\Vert {\mathbf{w}}_{k+1}\Vert}\mathbf{r}(\mathbf{u}_k,\lambda_k)+\mathbf{R}(\theta \Delta_k)  \notag \\
&>&\frac{\theta \delta_k\mathbf{u}_{k}}{(1+\eta) \Vert {\mathbf{w}}_{k+1}\Vert}+\frac{\theta \eta \delta_k\mathbf{u}_{k}}{(1+\eta) \Vert {\mathbf{w}}_{k+1}\Vert} -M\theta^2 \left\Vert \Delta_{k}\right\Vert^2 \mathbf{e}. \label{eq:ftheta3}
\end{eqnarray}%

If $\eta_k \ge 1$, then $$\eta \delta_k \min \left( \mathbf{u}_{k}\right)\ge (1+\eta )M\Vert {\mathbf{w}}_{k+1}\Vert \left\Vert \Delta_{k}\right\Vert^2, $$ and it follows $$\frac{\eta \delta_k\mathbf{u}_{k}}{(1+\eta) \Vert {\mathbf{w}}_{k+1}\Vert} \ge M \left\Vert \Delta_{k}\right\Vert^2 \mathbf{e}.$$
Thus
\begin{eqnarray}
\mathbf{h}_{k}(1)&=&\frac{ \delta_k\mathbf{u}_{k}}{(1+\eta) \Vert {\mathbf{w}}_{k+1}\Vert}+\frac{ \eta \delta_k\mathbf{u}_{k}}{(1+\eta) \Vert {\mathbf{w}}_{k+1}\Vert} +\mathbf{R}(\Delta_k)  \notag \\
&>&\frac{\delta_k\mathbf{u}_{k}}{(1+\eta)\Vert {\mathbf{w}}_{k+1}\Vert }+\frac{ \eta \delta_k\mathbf{u}_{k}}{(1+\eta)\Vert {\mathbf{w}}_{k+1}\Vert } -M \left\Vert \Delta_{k}\right\Vert^2 \mathbf{e}>0.
\end{eqnarray}
If $\eta_k < 1$, we have
\begin{equation}
\theta _{k}=\eta _{k}=\frac{\eta \delta_k \min \left( \mathbf{u}_{k}\right)}{(1+\eta )M\Vert {\mathbf{w}}_{k+1}\Vert \left\Vert \Delta_{k}\right\Vert^2 },
\label{eq: betheta1}
\end{equation}%
which ensures the inequality
\begin{equation}
\frac{\theta_k\eta \delta_k\mathbf{u}_{k}}{(1+\eta)\Vert {\mathbf{w}}_{k+1}\Vert } \ge \theta^2_k M \left\Vert \Delta_{k}\right\Vert^2 \mathbf{e}.  \label{eq: dklow}
\end{equation}%
Substituting (\ref{eq: dklow})  into (\ref{eq:ftheta3}), we
obtain
\begin{eqnarray}
\mathbf{h}_{k}(\theta _{k}) &=&\frac{\theta_k \delta_k\mathbf{u}_{k}}{(1+\eta)\Vert {\mathbf{w}}_{k+1}\Vert }+\frac{\theta_k \eta \delta_k\mathbf{u}_{k}}{(1+\eta)\Vert {\mathbf{w}}_{k+1}\Vert } +\frac{1-\theta_k}{\Vert {\mathbf{w}}_{k+1}\Vert}\mathbf{r}(\mathbf{u}_k,\lambda_k)+\mathbf{R}(\theta_k \Delta_k)  \notag \\
&\ge&\frac{\theta_k \delta_k\mathbf{u}_{k}}{(1+\eta) \Vert {\mathbf{w}}_{k+1}\Vert}+\frac{\theta_k \eta \delta_k\mathbf{u}_{k}}{(1+\eta) } -M\theta_k^2 \left\Vert \Delta_{k}\right\Vert^2 \mathbf{e}  \notag \\
&\ge&\frac{\theta_k \delta_k\mathbf{u}_{k}}{(1+\eta)\Vert {\mathbf{w}}_{k+1}\Vert }>0. \label{eq333}
\end{eqnarray}%
Therefore,
\begin{equation*}
\lambda _{k+1}=\lambda _{k}+\min \left( \frac{\mathbf{h%
}_{k}(\theta _{k})}{\mathbf{u}_{k+1}}\right)>\lambda _{k} .
\end{equation*}
Next, we prove that the sequence $\left\{ \lambda _{k}\right\} $ is bounded above.
Suppose that $\left\{ \lambda _{k}\right\} $ is unbounded. This implies that  $\lambda_k \ge N>0$ for $k$ large enough. Since $A(\mathbf{u}_{k}){\bf u}_k \ge {\lambda}_{k}\mathbf{u}_{k}$, we then have
\begin{eqnarray*}
{\lambda}_{k}&\le &|\mathbf{u}_{k}^{T}A(\mathbf{u}_{k}){\bf u}_k|\\
&\le & |\mathbf{u}_{k}^{T}A{\bf u}_k| +\Gamma \left |\sum_{i=1}^{n} (1-\frac{1}{\mathbf{a}(i)+\mathbf{u}_k^{2}(i)})\mathbf{u}_k^{2}(i)\right | \\
&\le &  \Vert A\Vert + \Gamma(1 +n) <\infty,
\end{eqnarray*}
which is a contradiction.
\end{proof}


%

From  \eqref{eq:lowbdtheta}, we know that the inequality $\mathbf{h}_{k}(1)\geq \frac{\delta_k\mathbf{u}_{k}}{(1+\eta)\Vert {\mathbf{w}}_{k+1}\Vert }$ depends on the parameter $\eta$. Therefore, if $\eta$ large enough, then we can choose $\theta_k = 1$ for which $\mathbf{h}_{k}(1)>0$ holds.
By Theorem \ref{monotone}, we can indeed choose $\theta _{k} \in (0,1]$ in NNI such that the sequence $\{\lambda _{k}\}$ is strictly increasing. However, in practice, it is difficult to determine $\eta _{k}$. Therefore, we can determine $\theta _{k}$ by repeated halving technique. More precisely, for each $k$, we can take $\theta _{k}=1$ first and check whether $\mathbf{h}_{k}(1)>0$ holds. If not, then we update $\theta _{k}$ using $\theta _{k}\leftarrow \theta_{k}/2$ and check again until we get  $\theta _{k}$ for which $\mathbf{h}_{k}(1)>0$ holds. This process of repeatedly halving will be referred to as the halving procedure. As long as $\theta _{k}$  is bounded below by a positive constant, which will be mentioned in the next section. 

\section{Some basic properties of Newton-Noda iteration }
In this section, we prove a number of basic properties of NNI, which will be used to establish its convergence theory in Section 4.
\begin{lemma}
\label{yktox}Let $A$ be an irreducible M-matrix. Assume that the sequence $\left\{ \lambda _{k},\mathbf{u}_{k}, \mathbf{w}_{k}\right\} $ is generated by Algorithm~\ref%
{alg1}. For any subsequence $\left\{ \mathbf{u}_{k_{j}}\right\} \subseteq
\left\{ \mathbf{u}_{k}\right\} ,$ we have the following results:

\begin{enumerate}
\item[{\rm (i)}] If $\mathbf{u}_{k_{j}}\rightarrow \mathbf{v}$ as $j\rightarrow
\infty ,$ then $\mathbf{v}>0.$

\item[{\rm (ii)}] $\min (\mathbf{u}_{k}) \ge m$ for some positive constant $m$.

\item[{\rm (iii)}] $\Vert \mathbf{w}_k \Vert \le \frac{1}{m}$.

\end{enumerate}
\end{lemma}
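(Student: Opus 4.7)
My plan is to prove the three parts in sequence, each building on the previous. First I would establish the pointwise positivity of any limit point (part~(i)) using the irreducibility of $A$; then lift this to a uniform lower bound on $\min(\mathbf{u}_k)$ via a compactness argument (part~(ii)); and finally derive the norm bound on $\mathbf{w}_k$ from the algorithmic formula together with part~(ii) (part~(iii)).

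For part~(i), I would begin by observing that $\{\lambda_k\}$ is monotone increasing and bounded above by Theorem~\ref{monotone}, hence converges to some limit $\lambda_\infty$. The Noda-type inequality $\mathcal{A}(\mathbf{u}_{k_j})\mathbf{u}_{k_j}\geq \lambda_{k_j}\mathbf{u}_{k_j}$ passes to the limit, yielding $\mathcal{A}(\mathbf{v})\mathbf{v}\geq \lambda_\infty \mathbf{v}$ with $\mathbf{v}\geq 0$ and $\|\mathbf{v}\|=1$. Suppose for contradiction that the index set $S=\{i:\mathbf{v}^{(i)}=0\}$ is nonempty; then $S$ is a proper subset of $\{1,\dots,n\}$ since $\mathbf{v}\neq 0$. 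Because $\mathcal{A}(\mathbf{v})$ differs from $A$ only on its diagonal, its off-diagonal entries are the nonpositive entries of $A$. For each $i\in S$, the $i$th component of $\mathcal{A}(\mathbf{v})\mathbf{v}\geq\lambda_\infty\mathbf{v}$ reduces (after dropping the vanishing terms with $\mathbf{v}^{(j)}=0$) to $\sum_{j\notin S}A_{ij}\mathbf{v}^{(j)}\geq 0$, where every summand is nonpositive. Hence each summand is zero, and since $\mathbf{v}^{(j)}>0$ for $j\notin S$, this forces $A_{ij}=0$ for every $i\in S$ and $j\notin S$, contradicting the irreducibility of $A$.

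For part~(ii), I would argue by contradiction: if no uniform lower bound existed, some subsequence would satisfy $\min(\mathbf{u}_{k_j})\to 0$. By compactness of the unit sphere we could extract a further convergent sub-subsequence $\mathbf{u}_{k_{j_\ell}}\to\mathbf{v}$, and continuity of the coordinatewise minimum would give $\min(\mathbf{v})=0$, contradicting~(i).

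For part~(iii), I would exploit the closed form $\mathbf{u}_k+\Delta_k=J(\mathbf{u}_k)^{-1}\bigl(\delta_k\mathbf{u}_k+2\Gamma\,\mathrm{diag}(\mathbf{u}_k^{[2]}/(\mathbf{a}+\mathbf{u}_k^{[2]})^{[2]})\mathbf{u}_k\bigr)$ from \eqref{eq: linearsys0}. By part~(ii), the iterates $\mathbf{u}_k$ lie in a compact subset of the positive orthant, so the diagonal matrix on the right is uniformly bounded below entrywise by some $c>0$; the M-matrix property together with $J(\mathbf{u}_k)\mathbf{u}_k\geq c\mathbf{u}_k$ yields the componentwise bound $J(\mathbf{u}_k)^{-1}\mathbf{u}_k\leq\mathbf{u}_k/c$, hence a uniform bound on $J(\mathbf{u}_k)^{-1}\mathbf{u}_k$, and Remark~1(ii) then supplies a uniform bound on $\delta_k$. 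Writing $\mathbf{w}_{k+1}=(1-\theta_k)\mathbf{u}_k+\theta_k(\mathbf{u}_k+\Delta_k)$ as a convex combination with $\theta_k\in(0,1]$ gives $\|\mathbf{w}_{k+1}\|\leq\max(1,C)$ for some constant $C$; shrinking $m$ if necessary so that $m\leq 1/C$ and $m\leq 1$ then delivers $\|\mathbf{w}_k\|\leq 1/m$. The hardest step is the irreducibility argument in~(i), where one must exploit carefully that $\mathcal{A}(\mathbf{v})$ and $A$ share off-diagonal entries so the sign inequality at indices $i\in S$ collapses into a sum of nonpositive terms that must each vanish; parts~(ii) and~(iii) then follow fairly mechanically.
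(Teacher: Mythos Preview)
Your arguments for parts~(i) and~(ii) are essentially the paper's own: the same irreducibility contradiction (with the off-diagonal sign argument made explicit) for~(i), and the same compactness-on-the-unit-sphere contradiction for~(ii).

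For part~(iii), however, your route is genuinely different from---and considerably heavier than---the paper's. The paper never touches $J(\mathbf{u}_k)^{-1}$ or $\delta_k$ at all. It simply uses Remark~1(i), namely $\mathbf{u}_k^T\Delta_k=0$, to get $\mathbf{u}_k^T\mathbf{w}_{k+1}=\mathbf{u}_k^T(\mathbf{u}_k+\theta_k\Delta_k)=1$, so that $\|\mathbf{w}_{k+1}\|=1/\cos\angle(\mathbf{u}_k,\mathbf{u}_{k+1})$; then, since both are positive unit vectors, $\cos\angle(\mathbf{u}_k,\mathbf{u}_{k+1})=\mathbf{u}_k^T\mathbf{u}_{k+1}\geq\|\mathbf{u}_{k+1}\|_1\min(\mathbf{u}_k)\geq\min(\mathbf{u}_k)\geq m$, and $\|\mathbf{w}_{k+1}\|\leq 1/m$ follows with the \emph{same} $m$ from part~(ii), no shrinking required. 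This is a two-line argument.

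Your approach is workable but one step is not as immediate as you suggest: ``Remark~1(ii) then supplies a uniform bound on $\delta_k$'' needs more. The formula $\delta_k=\dfrac{\mathbf{u}_k^TJ(\mathbf{u}_k)^{-1}\mathbf{r}(\mathbf{u}_k,\lambda_k)}{\mathbf{u}_k^TJ(\mathbf{u}_k)^{-1}\mathbf{u}_k}$ requires a \emph{lower} bound on the denominator, whereas your M-matrix estimate $J(\mathbf{u}_k)^{-1}\mathbf{u}_k\leq\mathbf{u}_k/c$ only gives an upper bound. The clean fix is to observe that $0\leq\mathbf{r}(\mathbf{u}_k,\lambda_k)\leq (R/m)\,\mathbf{u}_k$ componentwise (with $R$ a uniform bound on $\|\mathbf{r}\|_\infty$, available since $\lambda_k$ is bounded), apply $J(\mathbf{u}_k)^{-1}\geq 0$ to get $J(\mathbf{u}_k)^{-1}\mathbf{r}\leq(R/m)\,J(\mathbf{u}_k)^{-1}\mathbf{u}_k$, and conclude $\delta_k\leq R/m$ directly---the denominator cancels. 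With this patch your argument goes through, at the cost of several extra estimates and a weakened constant.
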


\begin{proof}
(i). 
If $%
\lim_{j\rightarrow \infty }\mathbf{u}_{k_{j}}=\mathbf{v,}$ then $\mathbf{v}\ge 0$.
Let $S$ be the set of all indices $i$ such that $\lim_{j\to \infty} \mathbf{u}_{k_j}^{(i)}=\mathbf{v}^{(i)}= 0$. Since $\left\Vert \mathbf{u}_{k_{j}}\right\Vert =1$, $S$ is a proper subset of
$\{1, 2, \ldots, n\}$. Suppose $S$ is nonempty.
Then by the definition of $\lambda _{k},$
\begin{equation*}
 \lambda_{k_{j}}=\min \left( \frac{\mathcal{A}(\mathbf{u}_{k_j}){\bf u}_{k_j}}{\mathbf{u}_{k_j}}\right) \leq
\frac{\left(\mathcal{A}(\mathbf{u}_{k_j}){\bf u}_{k_j}\right)^{(i)}}{\mathbf{u}_{k_j}^{(i)}}<\infty \text{ for all }i=1, 2, \ldots, n.
\end{equation*}%
Since $\lim_{j\rightarrow \infty } \mathbf{u}_{k_j}^{(i)}= 0$ for $i\in S$,
it holds that $\lim_{j\rightarrow \infty }\left(\mathcal{A}(\mathbf{u}_{k_j}){\bf u}_{k_j}\right)^{(i)}=\left(\mathcal{A}(\mathbf{v}){\bf v}\right)^{(i)}=0$ for $i\in S$.
Thus, $\mathcal{A}(\mathbf{v})_{i,j}=0$ for all $i\in S$ and for all $j\notin S$,
which contradicts the irreducibility of $\mathcal{A}(\mathbf{v})$. Therefore, $S$ is empty and thus $\mathbf{v}>0$.

(ii). 
Suppose $\min (\mathbf{u}_{k})$ is not
bounded below by a positive constant. Then there exists a subsequence $\{k_j\}$ such that
$\lim_{j\rightarrow \infty }  \min (\mathbf{u}_{k_j}) =0$.
Since $\|\mathbf{u}_{k_j}\|=1$, we may assume that 
$\lim_{j\to \infty} \mathbf{u}_{k_{j}} =\mathbf{v}$ exists.
Then $\lim_{j\rightarrow \infty }  \min (\mathbf{u}_{k_j}) =
\min(\mathbf{v}) = 0$. This is a contradiction since  $\mathbf{v}>0$ by (i).
Therefore,  $\min (\mathbf{u}_{k})$ is
bounded below by a positive constant. That is $\min (\mathbf{u}_{k}) \ge m$ for some positive constant $m$.

(iii).
From Remark 1, we have $\mathbf{u}_{k}^{T}\mathbf{w}_{k+1}=1$  and then 
$$\Vert \mathbf{w}_{k+1} \Vert = \frac{\mathbf{u}_k^T\mathbf{w}_{k+1}}{\cos\angle (\mathbf{u}_k,\mathbf{u}_{k+1})}=\frac{1}{\cos\angle (\mathbf{u}_k,\mathbf{u}_{k+1})}.$$
Since $\mathbf{u}_k>0$ and $\mathbf{u}_{k+1}>0$ with $\Vert \mathbf{u}_k\Vert =\Vert \mathbf{u}_{k+1}\Vert =1$, we have
$$
\cos\angle (\mathbf{u_k},\mathbf{u_{k+1}}) =\mathbf{u}_k^T\mathbf{u}_{k+1}\geq \Vert \mathbf{u}_{k+1}\Vert_1 \min(\mathbf{u}_k)
>\Vert \mathbf{u}_{k+1}\Vert \min(\mathbf{u}_k)=\min(\mathbf{u}_k),
$$
where $\Vert\cdot\Vert_1$ is the vector 1-norm. Form (ii), 
\begin{equation}
\Vert \mathbf{w}_{k+1} \Vert =\frac{1}{\cos\angle (\mathbf{u}_k,\mathbf{u}_{k+1})} \le \frac{1}{\min(\mathbf{u}_k)}\le \frac{1}{m} < \infty.
\end{equation}

\end{proof}

\begin{lemma}
\label{thetak}Assume that the sequence $\left\{ \Delta_k, \delta_k, \theta_k\right\} $ is generated by Algorithm~\ref{alg1}. We have the following results:

\begin{enumerate}
\item[{\rm (i)}] There exists a constant $\beta>0$ such that $\beta \Vert \Delta_k \Vert \le \delta_k$.

\item[{\rm (ii)}] $\theta_k = 1$ if $\Vert \Delta_{k}\Vert \le \frac{\eta \beta }{(1+\eta )M } $ where $(\eta, M)$ is as in Theorem \ref{monotone}.

\item[{\rm (iii)}] $\theta_k \ge \xi$ for some positive constant $\xi$.

\end{enumerate}
\end{lemma}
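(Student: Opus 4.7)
My overall plan is to exploit compactness. By Lemma~\ref{yktox} the iterates $\mathbf{u}_k$ lie in the compact set $K_u = \{\mathbf{u}\in\mathbb{R}^n: \|\mathbf{u}\|=1,\ \min(\mathbf{u})\ge m\}$, and by Theorem~\ref{monotone} the sequence $\lambda_k$ is monotone and bounded, hence lies in a compact interval $I_\lambda$. On $K_u\times I_\lambda$, $J(\mathbf{u})$ is a continuous irreducible nonsingular $M$-matrix, so $(J(\mathbf{u}))^{-1}$ is continuous and strictly positive entrywise. Consequently the scalar functions $\mathbf{u}^T(J(\mathbf{u}))^{-1}\mathbf{u}$, the componentwise minimum of $\mathbf{u}^T(J(\mathbf{u}))^{-1}$, the matrix norm $\|(J(\mathbf{u}))^{-1}\|$, and the residual norm $\|\mathbf{r}(\mathbf{u},\lambda)\|$ all admit uniform positive lower and finite upper bounds.

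For (i), I use $\Delta_k = (J(\mathbf{u}_k))^{-1}(\delta_k\mathbf{u}_k-\mathbf{r}(\mathbf{u}_k,\lambda_k))$ from (\ref{eq:step1}) and the formula for $\delta_k$ from Remark~1(ii). Since $\mathbf{r}(\mathbf{u}_k,\lambda_k)\ge 0$ by the Noda choice of $\lambda_k$ and $\mathbf{u}_k^T(J(\mathbf{u}_k))^{-1}$ has all entries bounded below by some $p_*>0$ while $\mathbf{u}_k^T(J(\mathbf{u}_k))^{-1}\mathbf{u}_k$ is bounded above by some $q^*$, I obtain $\delta_k\ge (p_*/q^*)\|\mathbf{r}(\mathbf{u}_k,\lambda_k)\|_1\ge (p_*/q^*)\|\mathbf{r}(\mathbf{u}_k,\lambda_k)\|$. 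Conversely, bounding $\delta_k\le c_2\|\mathbf{r}(\mathbf{u}_k,\lambda_k)\|$ and using $\|(J(\mathbf{u}_k))^{-1}\|\le J^*$ gives $\|\Delta_k\|\le C_1\|\mathbf{r}(\mathbf{u}_k,\lambda_k)\|$, and dividing produces the desired $\beta=(p_*/q^*)/C_1$. For (ii), I substitute $\delta_k\ge\beta\|\Delta_k\|$ into the estimate (\ref{eq:ftheta3}) at $\theta=1$; using also $\min(\mathbf{u}_k)\ge m$ and $\|\mathbf{w}_{k+1}\|\le 1/m$, the term $\frac{\eta\delta_k\mathbf{u}_k}{(1+\eta)\|\mathbf{w}_{k+1}\|}$ dominates $M\|\Delta_k\|^2\mathbf{e}$ whenever $\|\Delta_k\|$ lies below the stated threshold (with $\beta$ appropriately absorbing the factor $m^2$), which forces the defining inequality in (\ref{eq:lowbdtheta}) to hold and thus $\theta_k=1$.

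For (iii), I first note that $\|\Delta_k\|$ is uniformly bounded, say $\|\Delta_k\|\le\Delta_{\max}$, by the same compactness argument. In the case $\theta_k=1$ the bound is trivial; in the case $\theta_k=\eta_k$ from (\ref{eq: betheta1}), substituting $\delta_k\ge\beta\|\Delta_k\|$, $\min(\mathbf{u}_k)\ge m$, $\|\mathbf{w}_{k+1}\|\le 1/m$, and $\|\Delta_k\|\le\Delta_{\max}$ directly yields $\eta_k\ge\frac{\eta\beta m^2}{(1+\eta)M\Delta_{\max}}$. For the halving variant, a parallel analysis of $\mathbf{h}_k(\theta)$ from (\ref{eq:ftheta3}) gives a sufficient threshold $T=\Theta(1/\Delta_{\max})$ below which $\mathbf{h}_k(\theta)>0$ automatically holds; since the halving procedure stops at the first $\theta_k=2^{-j}\le T$, the previous value $2^{-(j-1)}>T$ (or $j=0$), so $\theta_k\ge T/2$. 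Taking $\xi$ to be the minimum of $1$ and these lower bounds finishes the argument. The main obstacle is part (i): I need the three ingredients $\mathbf{r}\ge 0$, strict positivity of $\mathbf{u}_k^T(J(\mathbf{u}_k))^{-1}$ (from irreducibility), and uniform compactness to simultaneously prevent $\delta_k/\|\Delta_k\|$ from degenerating to zero, and without any of them the ratio bound fails.
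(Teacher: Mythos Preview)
Your proposal is correct and rests on the same compactness backbone as the paper (Lemma~\ref{yktox} giving $\min(\mathbf{u}_k)\ge m$ and $\|\mathbf{w}_{k+1}\|\le 1/m$), but the execution of parts (i) and (iii) differs from the paper's in instructive ways. For (i), you sandwich both $\delta_k$ and $\|\Delta_k\|$ between constant multiples of $\|\mathbf{r}(\mathbf{u}_k,\lambda_k)\|$ and then take the ratio; the paper instead bounds $\|J(\mathbf{u}_k)^{-1}\mathbf{r}(\mathbf{u}_k,\lambda_k)\|$ directly by $\delta_k$ via the same angle trick used in Lemma~\ref{yktox}(iii), avoiding the auxiliary upper bound $\delta_k\le c_2\|\mathbf{r}\|$ altogether. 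For (iii), your argument is direct: a uniform bound $\|\Delta_k\|\le\Delta_{\max}$ (again by compactness) plugged into the formula for $\eta_k$ together with $\delta_k\ge\beta\|\Delta_k\|$ yields an explicit floor $\eta_k\ge\frac{\eta\beta m^2}{(1+\eta)M\Delta_{\max}}$. The paper proceeds by contradiction, extracting a subsequence with $\theta_{k_j}\to 0$ and $\mathbf{u}_{k_j}\to\mathbf{v}>0$, then using continuity of $F'(\cdot)^{-1}$ to show $\|\Delta_{k_j}\|$ stays bounded, which forces $\eta_{k_j}\not\to 0$. Your direct route is shorter and gives a computable $\xi$; the paper's subsequence argument is more roundabout but requires only the limiting behaviour rather than a uniform bound on $\|\Delta_k\|$.

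One small imprecision worth tightening: you assert that $J(\mathbf{u})$ is a nonsingular $M$-matrix on all of $K_u\times I_\lambda$, but the $M$-matrix property is only guaranteed when $\lambda\le\min(\mathcal{A}(\mathbf{u})\mathbf{u}/\mathbf{u})$, i.e.\ along the graph $\{(\mathbf{u},\lambda(\mathbf{u})):\mathbf{u}\in K_u\}$ of the Noda functional. Since every iterate lies on that graph, and the graph is a compact subset of $K_u\times I_\lambda$ (being the continuous image of $K_u$), all of your continuity and extremum arguments go through once restricted there; this is also what the paper does implicitly when it writes $\max_{m\le\min(\mathbf{u}),\|\mathbf{u}\|=1}\|J(\mathbf{u})^{-1}\mathbf{u}\|$.
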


\begin{proof}
(i).
From the step 3 of Algorithm~\ref{alg1}, we have
\begin{eqnarray}\label{Deltak}
\Vert \Delta_k\Vert &\le& \Vert J(\mathbf{u}_k)^{-1}\left(\delta_k \mathbf{u}_k - \mathbf{r}(\mathbf{u}_k,\lambda_k)\right) \Vert \notag \\   
&\le& \delta_k \Vert J(\mathbf{u})_k^{-1}\mathbf{u}_k \Vert  + \Vert J(\mathbf{u}_{k})^{-1}\mathbf{r}(\mathbf{u}_k,\lambda_k)\Vert .
\end{eqnarray}
Since $\Vert J(\mathbf{u})^{-1}\mathbf{u} \Vert $ a continuous function achieves its extreme values in a compact set,
it follows
$$\max_{0<m\le \min\left(\mathbf{u}\right) \le 1, \Vert \mathbf{u}\Vert =1} \left(\Vert J(\mathbf{u})^{-1}\mathbf{u} \Vert \right) < \infty.$$
Therefore, $\Vert J(\mathbf{u})^{-1}\mathbf{u} \Vert \le M_2$ for some constant $M_2$.

On the other hand, from (ii) of Remark 1, we have 
$$\left(\mathbf{u}_{k}^{T}(J(\mathbf{u}_{k}))^{-1}\mathbf{u}_{k}\right)\delta_k = \mathbf{u}_{k}^{T}(J(\mathbf{u}_{k}))^{-1}\mathbf{r}(\mathbf{u}_k,\lambda_k).$$
Since $\mathbf{u}_k>0$ and $J(\mathbf{u}_{k})^{-1}\mathbf{r}(\mathbf{u}_k,\lambda_k)>0$, by using the same proving technique of (iii) of Lemma \ref{yktox}, we have 
$$
\cos\angle (\mathbf{u_k},\frac{J(\mathbf{u}_{k})^{-1}\mathbf{r}(\mathbf{u}_k,\lambda_k)}{\Vert J(\mathbf{u}_{k})^{-1}\mathbf{r}(\mathbf{u}_k,\lambda_k)\Vert}) >\min(\mathbf{u}_k)\ge m,
$$
which implies
\begin{eqnarray*}
\Vert J(\mathbf{u}_{k})^{-1}\mathbf{r}(\mathbf{u}_k,\lambda_k)\Vert &=& \mathbf{u}_{k}^{T}J(\mathbf{u}_{k})^{-1}\mathbf{r}(\mathbf{u}_k,\lambda_k)\sec \angle (\mathbf{u_k},\frac{J(\mathbf{u}_{k})^{-1}\mathbf{r}(\mathbf{u}_k,\lambda_k)}{\Vert J(\mathbf{u}_{k})^{-1}\mathbf{r}(\mathbf{u}_k,\lambda_k)\Vert}) \\
&=& \left(\mathbf{u}_{k}^{T}J(\mathbf{u}_{k})^{-1}\mathbf{u}_{k}\right)\delta_k \sec \angle (\mathbf{u_k},\frac{J(\mathbf{u}_{k})^{-1}\mathbf{r}(\mathbf{u}_k,\lambda_k)}{\Vert J(\mathbf{u}_{k})^{-1}\mathbf{r}(\mathbf{u}_k,\lambda_k)\Vert}) \\
&\le& \frac{\delta_k}{m} \Vert J(\mathbf{u})_k^{-1}\mathbf{u}_k \Vert \le \frac{M_2}{m}\delta_k.
\end{eqnarray*}
From (\ref{Deltak}) and the above inequality, 
\begin{eqnarray*}
\Vert \Delta_k\Vert &\le& \delta_k \Vert J(\mathbf{u})_k^{-1}\mathbf{u}_k \Vert  + \Vert J(\mathbf{u}_{k})^{-1}\mathbf{r}(\mathbf{u}_k,\lambda_k)\Vert \\
&\le& \left(M_2 +\frac{M_2}{m}\right)\delta_k:= \frac{1}{\beta}\delta_k.
\end{eqnarray*}

(ii). 
If $\Vert \Delta_{k}\Vert \le \frac{\eta \beta }{(1+\eta )M } $, then
\begin{eqnarray*}
\eta_{k}&=&\frac{\eta \delta_k \min \left( \mathbf{u}_{k}\right)}{(1+\eta )M\Vert {\mathbf{w}}_{k+1}\Vert\left\Vert \Delta_{k}\right\Vert^2 } \\
&=& \frac{\eta \delta_k \min \left( \mathbf{u}_{k}\right)}{(1+\eta )M\Vert {\mathbf{w}}_{k+1}\Vert} \frac{\delta_k}{\Vert \Delta_{k} \Vert} \frac{1}{\Vert \Delta_{k} \Vert}\\
&\ge& \frac{\eta \beta m}{(1+\eta )M\Vert {\mathbf{w}}_{k+1}\Vert} \frac{1}{\Vert \Delta_{k} \Vert} \ge 1.
\end{eqnarray*}
From the proof of Theorem \ref{monotone}, $\theta_k = 1$ when $\eta_k \ge 1.$

(iii).
From (\ref{eq:lowbdtheta}), we recall that
\begin{equation*}
\theta _{k}=\left\{
\begin{array}{cl}
1 & \text{if }\mathbf{h}_{k}(1)\geq \frac{\delta_k\mathbf{u}_{k}}{(1+\eta)\Vert {\mathbf{w}}_{k+1}\Vert }\text{;} \\
\eta _{k} & \text{otherwise,}%
\end{array}%
\right.
\end{equation*}%
where $\eta_{k}=\frac{\eta \delta_k \min \left( \mathbf{u}_{k}\right)}{(1+\eta )M\Vert {\mathbf{w}}_{k+1}\Vert\left\Vert \Delta_{k}\right\Vert^2 } .$  Suppose $\theta _{k}$ is not
bounded below by $\xi >0$. Since $\mathbf{u}_{k}$ is bounded, we can find a subsequence $\{k_{j}\} $
such that $$\lim_{j\rightarrow \infty }\theta _{k_{j}}=0, \lim_{j\rightarrow \infty }\mathbf{u}_{k_{j}}=\mathbf{v}>0.$$
Note that $\mathbf{v}>0$ by Lemma \ref{yktox}.

From (\ref{eq:Fnon}), $F^{\prime}(\mathbf{u}_{k_j}, \lambda_{k_j})$ is a nonsingular matrix, and the vector $ \left [\Delta_{k_j}^{T},\delta_{k_j} \right ]^{T}$
satisfies
\begin{equation*}
 \left [\begin{array}{c}
\Delta_{k_j}\\
\delta_{k_j}
\end{array}\right ]=-F'(\mathbf{u}_{k_j},\lambda_{k_j} )^{-1}\left[
\begin{array}{c}
\mathcal{A}({\bf u}_{k_j}){\bf u}_{k_j}-\lambda_{k_j}{\bf u}_{k_j}  \\
0
\end{array}
\right].
\end{equation*}%
 Since the sequence $\left\{ \lambda_{k}\right\} $ is
 monotonically increasing and bounded above, we  have $\lim_{k\rightarrow
\infty }\lambda_{k}=\alpha $. Therefore, 
\begin{eqnarray*}
\lim_{j\rightarrow \infty }\left [\begin{array}{c}
\Delta_{k_j}\\
\delta_{k_j}
\end{array}\right ]&=&\lim_{j\rightarrow \infty
}-F'(\mathbf{u}_{k_j},\lambda_{k_j} )^{-1}\left[
\begin{array}{c}
\mathbf{r}(\mathbf{u}_{k_j},\lambda_{k_j})  \\
0
\end{array}
\right]\\
&=&-F'(\mathbf{v},\alpha)^{-1}\left[
\begin{array}{c}
\mathbf{r}(\mathbf{v},\alpha)  \\
0
\end{array}
\right]<\infty,
\end{eqnarray*}%
which means $\left\Vert \Delta_{k_j} \right\Vert $ is bounded. 
If $\eta_k$ is defined only on a finite subset of $\{k_j\}$, then $\theta_{k_j}=1$ except for a finite number of
$j$ values, contradicting $\lim_{j\rightarrow \infty }\theta _{k_{j}}=0$.
If $\eta_k$ is defined on an infinite subset $\{k_{j_i}\}$ of $\{k_j\}$,  then
\begin{eqnarray*}
0=\lim_{i\rightarrow \infty }\eta _{k_{j_i}}&=& \lim_{i\rightarrow \infty } \frac{\eta \delta_{k_{j_i}} \min \left( \mathbf{u}_{k_{j_i}}\right)}{(1+\eta )M\Vert {\mathbf{w}}_{k_{j_i}+1}\Vert\left\Vert \Delta_{k_{j_i}}\right\Vert^2 } \\
&\ge& \lim_{i\rightarrow \infty } \frac{\eta \delta_{k_{j_i}} m}{(1+\eta )Mm\left\Vert \Delta_{k_{j_i}}\right\Vert^2 }
\\
&=& \lim_{i\rightarrow \infty } \frac{\eta \delta_{k_{j_i}} }{(1+\eta )M\left\Vert \Delta_{k_{j_i}}\right\Vert } \frac{1}{\Vert \Delta_{k_{j_i}}\Vert}\\
&\ge& \lim_{i\rightarrow \infty } \frac{\eta \beta }{(1+\eta )M } \frac{1}{\Vert \Delta_{k_{j_i}}\Vert}.
\end{eqnarray*}
It follows that 
$\lim_{i\rightarrow \infty } \Vert \Delta_{k_{j_i}}\Vert = \infty.$
 This is contradictory to $\Vert \Delta_{k_j}  \Vert < \infty$.
\end{proof}

%

\section{Convergence analysis}

In this section, we prove that the convergence of NNI is global and quadratic, assuming that $\mathbf{u}_{k}\neq \mathbf{u}_{\ast }$ for each $k$.  

\subsection{Global convergence of NNI}

Theorem \ref{monotone} shows that the sequence $\left\{\lambda_{k}\right\} $ is strictly increasing and bounded above by a constant  and hence converges. We now show that the limit of $\lambda_{k}$ is precisely the eigenvalue $\lambda_{\ast}$ of NAEP (\ref{dnep}).

\begin{theorem}
\label{main}Let $A$ be an irreducible M-matrix and the sequence $\left\{ \lambda_{k}\right\} $ is
generated by Algorithm~\ref{alg1}. If $\mathbf{a}, \Gamma>0$, then the NAEP (\ref{dnep}) has a positive eigenvecor.
\end{theorem}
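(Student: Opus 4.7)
The plan is to extract a convergent subsequence from the bounded iteration sequence and show its limit is a positive eigenpair of NAEP~(\ref{dnep}), by driving the residual $\mathbf{r}(\mathbf{u}_k,\lambda_k)$ to zero through the gap $\lambda_{k+1}-\lambda_k$.

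First, by Theorem~\ref{monotone} the scalar sequence $\{\lambda_k\}$ is strictly increasing and bounded above by $\|A\|+(1+n)\Gamma$, hence converges to some $\alpha$; in particular $\lambda_{k+1}-\lambda_k\to 0$. The next step is to exploit the sharp lower bound on $\mathbf{h}_k(\theta_k)$ obtained in (\ref{eq333}). Combining (\ref{eq: recurrLam}) with (\ref{eq333}) gives, componentwise,
\begin{equation*}
\lambda_{k+1}-\lambda_k \;=\; \min\!\left(\frac{\mathbf{h}_k(\theta_k)}{\mathbf{u}_{k+1}}\right) \;\ge\; \frac{\theta_k\,\delta_k}{(1+\eta)\,\|\mathbf{w}_{k+1}\|}\,\min\!\left(\frac{\mathbf{u}_k}{\mathbf{u}_{k+1}}\right).
\end{equation*}
Since $\|\mathbf{u}_{k+1}\|=1$, one has $\max(\mathbf{u}_{k+1})\le 1$; by Lemma~\ref{yktox}(ii)--(iii), $\min(\mathbf{u}_k)\ge m$ and $\|\mathbf{w}_{k+1}\|\le 1/m$; and by Lemma~\ref{thetak}(iii), $\theta_k\ge\xi$. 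Therefore
\begin{equation*}
\lambda_{k+1}-\lambda_k \;\ge\; \frac{\xi\, m^{2}}{1+\eta}\,\delta_k,
\end{equation*}
so $\delta_k\to 0$. Then Lemma~\ref{thetak}(i) yields $\|\Delta_k\|\le \delta_k/\beta\to 0$, and the first block row of the Newton system in Step 3 of Algorithm~\ref{alg1} gives $\mathbf{r}(\mathbf{u}_k,\lambda_k) = \delta_k\mathbf{u}_k - J(\mathbf{u}_k)\Delta_k$; since $\mathbf{u}_k$ is bounded and the entries of $J(\mathbf{u}_k)$ depend continuously on $\mathbf{u}_k$ and the bounded $\lambda_k$, $\|J(\mathbf{u}_k)\|$ is uniformly bounded, so $\|\mathbf{r}(\mathbf{u}_k,\lambda_k)\|\to 0$.

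Finally, since $\{\mathbf{u}_k\}$ lies on the unit sphere, Bolzano--Weierstrass yields a subsequence $\mathbf{u}_{k_j}\to\mathbf{v}$ with $\|\mathbf{v}\|=1$; by Lemma~\ref{yktox}(i), $\mathbf{v}>0$. Passing to the limit in $\mathbf{r}(\mathbf{u}_{k_j},\lambda_{k_j})=\mathcal{A}(\mathbf{u}_{k_j})\mathbf{u}_{k_j}-\lambda_{k_j}\mathbf{u}_{k_j}\to 0$ and using continuity of $\mathcal{A}(\cdot)$ at $\mathbf{v}>0$, we get $\mathcal{A}(\mathbf{v})\mathbf{v}=\alpha\mathbf{v}$, so $(\mathbf{v},\alpha)$ is a positive eigenpair of NAEP~(\ref{dnep}).

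The main obstacle is not any single deep estimate, but carefully combining the three uniform bounds (the componentwise lower bound $m$ on $\mathbf{u}_k$, the upper bound $1/m$ on $\|\mathbf{w}_{k+1}\|$, and the lower bound $\xi$ on $\theta_k$) so that the telescoping convergence of $\lambda_k$ forces $\delta_k\to 0$; once that is done, propagating this to $\|\mathbf{r}(\mathbf{u}_k,\lambda_k)\|\to 0$ and taking limits along a positive limit point is routine.
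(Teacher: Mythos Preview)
Your proposal is correct and follows essentially the same route as the paper: use the monotone convergence of $\{\lambda_k\}$ together with the lower bound from (\ref{eq333}) and the uniform bounds of Lemmas~\ref{yktox} and~\ref{thetak} to force $\delta_k\to 0$, then pass to a positive limit point via Lemma~\ref{yktox}(i). The only minor variation is that you deduce $\mathbf{r}(\mathbf{u}_k,\lambda_k)\to 0$ by first getting $\|\Delta_k\|\to 0$ from Lemma~\ref{thetak}(i) and reading off the first block row of the Newton system, whereas the paper passes directly from $\delta_{k_j}\to 0$ to $\mathbf{r}(\mathbf{u}_{k_j},\lambda_{k_j})\to 0$ by invoking Lemma~\ref{equiThm} in the limit; your version is arguably a bit more explicit but the content is the same.
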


\begin{proof}
From (\ref{eq: recurrLam}), (\ref{eq333}) and Lemma~\ref{thetak}, we have
\begin{eqnarray}
\lambda_{k+1}-\lambda_{k}&=&\min \left( \frac{%
\mathbf{h}_{k}(\theta _{k})}{\mathbf{u}_{k+1}}%
\right) \geq \min \left(\frac{\theta_k \delta_k\mathbf{u}_{k}}{(1+\eta)\Vert {\mathbf{w}}_{k+1}\Vert \mathbf{u}_{k+1}}\right)  \notag \\
&\geq &\min \left(\frac{\xi \delta_k\mathbf{u}_{k}}{(1+\eta) \mathbf{w}_{k+1}}\right).  \label{eq: 3term}
\end{eqnarray}%

From (iii) of Lemma \ref{yktox}, we have $\|\mathbf{w}_{k+1}\|
\le \frac{1}{m} < \infty$.
It follows from (\ref{eq: 3term}) that $\lim_{k\rightarrow \infty } \delta_k \min (\mathbf{u}_{k})=0$.
From (ii) of Lemma \ref{yktox},  $\min (\mathbf{u}_{k})$ is
bounded below by a positive constant, and thus
$\lim_{k\rightarrow \infty }\delta_k=0.$

Let $\mathbf{v}$ be any limit point of $\{\mathbf{u}_k\}$, with $\lim_{j\to \infty} \mathbf{u}_{k_j}=\mathbf{v}>0$.
From Lemma~\ref{equiThm}, we then have $\lim_{j\to \infty}\delta _{k_j}=0$ if and only if $\lim_{j\to \infty} \left(\mathcal{A}(\mathbf{u}_{k_j})\mathbf{u}_{k_j}-\lambda _{k_j}\mathbf{u}_{k_j}\right)=0$, which means
$\mathcal{A}(\mathbf{v})\mathbf{v}=\lambda \mathbf{v}$. Therefore, $\mathbf{v}$ is a positive eigenvector of NAEP and $\lambda =\min \left( \frac{\mathcal{A}(\mathbf{v})\mathbf{v}}{\mathbf{v}}\right)$ is the corresponding eigenvalue, i.e.,  $\mathbf{u}_{\ast}=\mathbf{v}$ and $ \lambda_{\ast}=\lambda.$
\end{proof}

The above theorem guarantees the global convergence of NNI and also proves the existence of positive eigenvectors of NAEP.

\subsection{Quadratic convergence of NNI}

In the previous section, we discussed the global convergence of NNI. In the following section, we will establish a convergence rate analysis by exploiting a connection between NNI and Newton’s method. So we start with the following result about Newton's method.

\begin{lemma}
\label{eslonyk} Suppose that $\left( \mathbf{u}_{k}, \lambda_{k}\right) $ form Algorithm~\ref{alg1} is sufficiently close to an eigenpair $\left( \mathbf{u}_{\ast}, \lambda_{\ast}\right) $ with $\mathbf{u}_{\ast} >0$ and $\Vert\mathbf{u}_{\ast}\Vert=1$. Let $\left\{ \allowbreak \widehat{\mathbf{u}}_{k},\widehat{\lambda }_{k}\right\}$ be obtained by
Newton's method from $\left( \mathbf{u}_{k}, \lambda_{k}\right) $, i.e., 
$$\widehat{\mathbf{u}}_{k} = \mathbf{u}_{k}+\Delta_k, \text{  } \widehat{\lambda }_{k}=\lambda_k+\delta_k,$$
where $\Delta_k$ and $\delta_k$ as in Algorithm \ref{alg1}.
Then there is a constant $\beta $ such that
for all $\left( \mathbf{u}_{k}, \lambda_{k}\right) $
sufficiently close to $\left( \mathbf{u}_{\ast },    \lambda_{\ast})\right)$
\begin{equation}
\left\Vert \left[
\begin{array}{c}
\widehat{\mathbf{u}}_{k+1} \\
\widehat{\lambda }_{k+1}%
\end{array}%
\right] -\left[
\begin{array}{c}
\mathbf{u}_{\ast } \\
\lambda_{\ast}%
\end{array}%
\right] \right\Vert \leq c \left\Vert \left[
\begin{array}{c}
\mathbf{u}_{k} \\
\lambda_{k}%
\end{array}%
\right] -\left[
\begin{array}{c}
\mathbf{u}_{\ast } \\
\lambda_{\ast}%
\end{array}%
\right] \right\Vert ^{2},  \label{eq: qudraNT}
\end{equation}%
\end{lemma}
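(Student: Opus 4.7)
The plan is to recognize this as a direct application of the classical quadratic convergence theorem for Newton's method applied to the smooth system $\mathbf{F}(\mathbf{u},\lambda)=0$ defined in \eqref{eq:Fx}. Since $(\widehat{\mathbf{u}}_{k+1},\widehat{\lambda}_{k+1})=(\mathbf{u}_k+\Delta_k,\lambda_k+\delta_k)$ is exactly one undamped Newton step from $(\mathbf{u}_k,\lambda_k)$, the standard theorem gives
\[
\left\|\begin{bmatrix}\widehat{\mathbf{u}}_{k+1}\\ \widehat{\lambda}_{k+1}\end{bmatrix}-\begin{bmatrix}\mathbf{u}_*\\ \lambda_*\end{bmatrix}\right\|\le \tfrac{1}{2}\bigl\|F'(\mathbf{u}_*,\lambda_*)^{-1}\bigr\|\cdot L\cdot \left\|\begin{bmatrix}\mathbf{u}_k\\ \lambda_k\end{bmatrix}-\begin{bmatrix}\mathbf{u}_*\\ \lambda_*\end{bmatrix}\right\|^2,
\]
provided that (a) $F$ is $C^2$ near $(\mathbf{u}_*,\lambda_*)$ with $F'$ locally Lipschitz with constant $L$, and (b) $F'(\mathbf{u}_*,\lambda_*)$ is nonsingular. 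So the proof reduces to verifying these two hypotheses and then invoking the theorem.

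For (a), smoothness is immediate: the only nonlinearity in $F$ is the term $\mathrm{diag}(\mathbf{e}/(\mathbf{a}+\mathbf{u}^{[2]}))\mathbf{u}$, and since $\mathbf{u}_*>0$ and $\mathbf{a}>0$, the denominator $\mathbf{a}+\mathbf{u}^{[2]}$ is bounded away from zero on a neighborhood of $\mathbf{u}_*$. Thus $F$ is real-analytic there, and in particular $F'$ is Lipschitz on a small closed ball around $(\mathbf{u}_*,\lambda_*)$ with some constant $L$.

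For (b), the nonsingularity of $F'(\mathbf{u}_*,\lambda_*)$ is the only substantive point, and it follows from the block-triangular factorization \eqref{eq:Fnon} once we show that $J(\mathbf{u}_*)$ is a nonsingular $M$-matrix and that $\mathbf{u}_*^T J(\mathbf{u}_*)^{-1}\mathbf{u}_*\neq 0$. Because $\mathcal{A}(\mathbf{u}_*)\mathbf{u}_*=\lambda_*\mathbf{u}_*$, the $Z$-matrix $\mathcal{A}(\mathbf{u}_*)-\lambda_* I$ is a singular irreducible $M$-matrix; adding the positive diagonal perturbation in \eqref{JandA} yields $J(\mathbf{u}_*)\mathbf{u}_*=2\Gamma\,\mathrm{diag}(\mathbf{u}_*^{[2]}/(\mathbf{a}+\mathbf{u}_*^{[2]})^{[2]})\mathbf{u}_*>0$, so $J(\mathbf{u}_*)$ is a nonsingular $M$-matrix, giving $J(\mathbf{u}_*)^{-1}\ge 0$ and hence $\mathbf{u}_*^T J(\mathbf{u}_*)^{-1}\mathbf{u}_*>0$. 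The factorization then shows $F'(\mathbf{u}_*,\lambda_*)$ is invertible and $\|F'(\mathbf{u}_*,\lambda_*)^{-1}\|$ is finite.

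With (a) and (b) established, I would conclude by a one-line Taylor-expansion argument: write $\mathbf{F}(\mathbf{u}_*,\lambda_*)=0$ and subtract from the Newton-step identity
\[
F'(\mathbf{u}_k,\lambda_k)\begin{bmatrix}\Delta_k\\ \delta_k\end{bmatrix}=-\mathbf{F}(\mathbf{u}_k,\lambda_k)
\]
to obtain $(\widehat{\mathbf{u}}_{k+1},\widehat{\lambda}_{k+1})-(\mathbf{u}_*,\lambda_*)=F'(\mathbf{u}_k,\lambda_k)^{-1}\bigl[\mathbf{F}(\mathbf{u}_*,\lambda_*)-\mathbf{F}(\mathbf{u}_k,\lambda_k)-F'(\mathbf{u}_k,\lambda_k)((\mathbf{u}_*,\lambda_*)-(\mathbf{u}_k,\lambda_k))\bigr]$, and bound the bracketed term by $\tfrac{L}{2}\|(\mathbf{u}_k,\lambda_k)-(\mathbf{u}_*,\lambda_*)\|^2$ using the Lipschitz property of $F'$. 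Taking $c$ slightly larger than $\tfrac{L}{2}\|F'(\mathbf{u}_*,\lambda_*)^{-1}\|$ absorbs the continuity perturbation of $\|F'(\mathbf{u}_k,\lambda_k)^{-1}\|$ and yields \eqref{eq: qudraNT}. No single step here is an obstacle; the only place where problem-specific work enters is the $M$-matrix verification in (b), which is already prepared by the discussion surrounding \eqref{JandA}.
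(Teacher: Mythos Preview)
Your proposal is correct and follows essentially the same approach as the paper: verify that $F'$ is nonsingular at (and hence near) $(\mathbf{u}_*,\lambda_*)$ and that $F'$ is locally Lipschitz, then invoke the standard quadratic convergence theorem for Newton's method. The paper's own proof is in fact just a two-sentence appeal to these same two facts, so your version simply supplies more of the underlying detail (in particular the $M$-matrix argument for nonsingularity of $J(\mathbf{u}_*)$).
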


\begin{proof}
We already know that $F^{\prime}\left( \mathbf{u}_{k}, \lambda_{k}\right) $ is nonsingular. It is also clear that $F^{\prime}\left( \mathbf{u}, \lambda \right) $ satisfies a Lipschitz condition at $\left( \mathbf{u}_{\ast}, \lambda_{\ast}\right)$ since its Fr\'echet derivative is continuous in a neighborhood of $\left( \mathbf{u}_{\ast}, \lambda_{\ast}\right)$. The inequality (\ref{eq: qudraNT}) is then a basic result of Newton's method.
\end{proof}

\begin{lemma}\label{relation}
Let $\left( \mathbf{u}_{\ast}, \lambda_{\ast}\right) $ be an eigenpair with $\mathbf{u}_{\ast} >0$ and $\Vert\mathbf{u}_{\ast}\Vert=1$. Let $\left\{ \mathbf{u}_{k},   \lambda_{k}\right\}$ be generated by NNI. Then there are constants $ c_2>0$ such that
 $ \left |\lambda _{k}-\lambda_{\ast}\right |  \le
 c_2 \|\mathbf{u}_{k}-\mathbf{u}_{\ast }\|$ for all $\mathbf{u}_{k}$ sufficiently close to $\mathbf{u}_{\ast}$.
\end{lemma}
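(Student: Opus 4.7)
The plan is to exploit the explicit formula $\lambda_k = \min_i\bigl((\mathcal{A}(\mathbf{u}_k)\mathbf{u}_k)^{(i)}/\mathbf{u}_k^{(i)}\bigr)$ together with the fact that, at the limit, $\mathcal{A}(\mathbf{u}_\ast)\mathbf{u}_\ast = \lambda_\ast \mathbf{u}_\ast$, so every component of the ratio $\mathcal{A}(\mathbf{u}_\ast)\mathbf{u}_\ast/\mathbf{u}_\ast$ equals $\lambda_\ast$. This reduces the claim to a smoothness argument for the map $\mathbf{u}\mapsto \mathcal{A}(\mathbf{u})\mathbf{u}$ in a neighborhood of $\mathbf{u}_\ast$, combined with a positive lower bound on the denominator $\mathbf{u}_k^{(i)}$ coming from $\mathbf{u}_\ast>0$.

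More precisely, I would define
\begin{equation*}
b_k^{(i)} \;=\; \frac{(\mathcal{A}(\mathbf{u}_k)\mathbf{u}_k)^{(i)}}{\mathbf{u}_k^{(i)}} - \lambda_\ast \;=\; \frac{(\mathcal{A}(\mathbf{u}_k)\mathbf{u}_k - \mathcal{A}(\mathbf{u}_\ast)\mathbf{u}_\ast)^{(i)} - \lambda_\ast (\mathbf{u}_k - \mathbf{u}_\ast)^{(i)}}{\mathbf{u}_k^{(i)}},
\end{equation*}
so that $\lambda_k - \lambda_\ast = \min_i b_k^{(i)}$. Because $\mathbf{a}>0$, the entries of $\mathcal{A}(\mathbf{u})\mathbf{u}$ are smooth functions of $\mathbf{u}$ on a neighborhood of $\mathbf{u}_\ast$, so there is a Lipschitz constant $L$ with $\|\mathcal{A}(\mathbf{u}_k)\mathbf{u}_k - \mathcal{A}(\mathbf{u}_\ast)\mathbf{u}_\ast\| \le L\|\mathbf{u}_k-\mathbf{u}_\ast\|$ whenever $\mathbf{u}_k$ is close enough to $\mathbf{u}_\ast$. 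Moreover, since $\mathbf{u}_\ast>0$, shrinking the neighborhood further ensures $\mathbf{u}_k^{(i)} \ge \tfrac{1}{2}\min(\mathbf{u}_\ast) =: \mu>0$ for every $i$. Combining these two ingredients yields
\begin{equation*}
|b_k^{(i)}| \;\le\; \frac{L+|\lambda_\ast|}{\mu}\,\|\mathbf{u}_k-\mathbf{u}_\ast\| \qquad \text{for all } i.
\end{equation*}

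Finally, observing that $|\min_i b_k^{(i)}| \le \max_i |b_k^{(i)}|$, we conclude
\begin{equation*}
|\lambda_k - \lambda_\ast| \;=\; \bigl|\min_i b_k^{(i)}\bigr| \;\le\; c_2\|\mathbf{u}_k-\mathbf{u}_\ast\|,
\end{equation*}
with $c_2 = (L+|\lambda_\ast|)/\mu$. There is no real obstacle here, the only thing to be careful about is ensuring the neighborhood is small enough for both the Lipschitz bound on $\mathcal{A}(\mathbf{u})\mathbf{u}$ and the positive lower bound on the denominators $\mathbf{u}_k^{(i)}$ to hold simultaneously; both follow from continuity together with $\mathbf{u}_\ast>0$ and $\mathbf{a}>0$.
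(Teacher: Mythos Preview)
Your proof is correct and follows essentially the same approach as the paper: both express $\lambda_k-\lambda_\ast$ as the componentwise minimum of $\mathcal{A}(\mathbf{u}_k)\mathbf{u}_k/\mathbf{u}_k-\lambda_\ast\mathbf{e}$ and then invoke local Lipschitz continuity of $\mathbf{u}\mapsto \mathcal{A}(\mathbf{u})\mathbf{u}/\mathbf{u}$ near $\mathbf{u}_\ast$. The only difference is cosmetic---the paper appeals directly to the continuous Fr\'echet derivative of the ratio, whereas you make the same bound explicit by separating the Lipschitz estimate for the numerator $\mathcal{A}(\mathbf{u})\mathbf{u}$ from the positive lower bound $\mu$ on the denominators.
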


\begin{proof}
Since
\begin{eqnarray*}
 \left |\lambda _{k}-\lambda_{\ast}\right |
&=&\min \left ( \frac{\mathcal{A}(\mathbf{u}_{k})\mathbf{u}_{k}}{\mathbf{u}_{k}}- \frac{\mathcal{A}(\mathbf{u}_{\ast})\mathbf{u}_{\ast}}{\mathbf{u}_{\ast}}\right )\le \left \|   \frac{\mathcal{A}(\mathbf{u}_{k})\mathbf{u}_{k}}{\mathbf{u}_{k}}- \frac{\mathcal{A}(\mathbf{u}_{\ast})\mathbf{u}_{\ast}}{\mathbf{u}_{\ast}}\right \| .
\end{eqnarray*}
Since the Fr\'echet derivative of $\frac{\mathcal{A}(\mathbf{u})\mathbf{u}}{\mathbf{u}}$ is continuous in a neighborhood of $\left( \mathbf{u}_{\ast}, \lambda_{\ast}\right)$, we have  $ \left |\lambda _{k}-\lambda_{\ast}\right |  \le
 c_2 \|\mathbf{u}_{k}-\mathbf{u}_{\ast }\|$ for all $\left( \mathbf{u}_{k}, \lambda_{k}\right)$ sufficiently close to $\left( \mathbf{u}_{\ast}, \lambda_{\ast}\right)$.
\end{proof}

We now prove the local quadratic convergence of Algorithm \ref{alg1}.

\begin{theorem}
\label{quadratic}
Assume $\left\{ \mathbf{u}_{k},   \lambda_{k}\right\}$ be generated by NNI. 
Suppose that $\left( \mathbf{u}_{k_0}, \lambda_{k_0}\right) $ is sufficiently close to an eigenpair $\left( \mathbf{u}_{\ast}, \lambda_{\ast}\right) $ with $\mathbf{u}_{\ast} >0$ and $\Vert\mathbf{u}_{\ast}\Vert=1$.
Then $\lambda_{k}$ converges to  $\lambda_{\ast}$ quadratically and $\mathbf{u}_{k}$ converges to  $\mathbf{u}_{\ast}$ quadratically.
\end{theorem}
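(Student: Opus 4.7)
The plan is to show that, close to the eigenpair, Algorithm \ref{alg1} reduces to a full Newton step followed by two post-processing operations — renormalization of the updated vector and a Noda-type recomputation of the eigenvalue — and then to argue that neither operation destroys the quadratic convergence of Newton's method. Thus the theorem follows from Lemma \ref{eslonyk} combined with Lemma \ref{relation} and a Lipschitz estimate for the normalization map.

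First I would verify that the step length satisfies $\theta_k = 1$ eventually. By continuity, $\mathbf{r}(\mathbf{u}_k,\lambda_k) \to 0$ as $(\mathbf{u}_k, \lambda_k) \to (\mathbf{u}_*, \lambda_*)$, and the formula in Remark 1(ii) together with $F'(\mathbf{u}_*, \lambda_*)$ being nonsingular yields $\delta_k \to 0$. Lemma \ref{thetak}(i) gives $\|\Delta_k\| \le \delta_k/\beta$, so $\|\Delta_k\|$ can be made as small as desired, and Lemma \ref{thetak}(ii) then forces $\theta_k = 1$. With $\theta_k = 1$, the iterate $\mathbf{w}_{k+1} = \mathbf{u}_k + \Delta_k$ coincides with the pure Newton update $\widehat{\mathbf{u}}_{k+1}$ from Lemma \ref{eslonyk}, so, writing $E_k := \|(\mathbf{u}_k,\lambda_k) - (\mathbf{u}_*,\lambda_*)\|$, we get
$$\|\widehat{\mathbf{u}}_{k+1} - \mathbf{u}_*\| + |\widehat{\lambda}_{k+1} - \lambda_*| \le c\, E_k^2.$$

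Next I would show that the normalization step $\mathbf{u}_{k+1} = \mathbf{w}_{k+1}/\|\mathbf{w}_{k+1}\|$ preserves the quadratic rate. Because $\|\mathbf{u}_*\| = 1$, the map $\mathbf{v} \mapsto \mathbf{v}/\|\mathbf{v}\|$ is smooth in a neighborhood of $\mathbf{u}_*$, and writing $\mathbf{u}_{k+1} - \mathbf{u}_* = (\widehat{\mathbf{u}}_{k+1} - \mathbf{u}_*)/\|\widehat{\mathbf{u}}_{k+1}\| + \mathbf{u}_*(1 - \|\widehat{\mathbf{u}}_{k+1}\|)/\|\widehat{\mathbf{u}}_{k+1}\|$ together with $|\|\widehat{\mathbf{u}}_{k+1}\| - 1| \le \|\widehat{\mathbf{u}}_{k+1} - \mathbf{u}_*\|$ gives $\|\mathbf{u}_{k+1} - \mathbf{u}_*\| \le C_1 \|\widehat{\mathbf{u}}_{k+1} - \mathbf{u}_*\|$ for some constant $C_1$. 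Chaining with the Newton bound yields $\|\mathbf{u}_{k+1} - \mathbf{u}_*\| \le C_1 c\, E_k^2$.

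Finally, since NNI does not use $\widehat{\lambda}_{k+1}$ but rather $\lambda_{k+1} = \min\bigl(\mathcal{A}(\mathbf{u}_{k+1})\mathbf{u}_{k+1}/\mathbf{u}_{k+1}\bigr)$, I invoke Lemma \ref{relation} at iteration $k+1$ to obtain $|\lambda_{k+1} - \lambda_*| \le c_2\|\mathbf{u}_{k+1} - \mathbf{u}_*\| \le c_2 C_1 c\, E_k^2$. To pass from estimates in $E_k$ to estimates in $\|\mathbf{u}_k - \mathbf{u}_*\|$ alone, I use Lemma \ref{relation} at step $k$: $|\lambda_k - \lambda_*| \le c_2\|\mathbf{u}_k - \mathbf{u}_*\|$, which shows $E_k \le (1 + c_2)\|\mathbf{u}_k - \mathbf{u}_*\|$. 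Combining these bounds, both $\|\mathbf{u}_{k+1} - \mathbf{u}_*\|$ and $|\lambda_{k+1} - \lambda_*|$ are bounded by constants times $\|\mathbf{u}_k - \mathbf{u}_*\|^2$, so both sequences converge quadratically; a standard induction on $k$ using a small enough starting radius keeps the iterates inside the neighborhood where $\theta_k = 1$ remains valid. The main obstacle I anticipate is precisely the last step: the Noda-type eigenvalue update $\lambda_{k+1}$ is not the Newton eigenvalue $\widehat{\lambda}_{k+1}$, and the only way to recover quadratic convergence of the eigenvalue is through Lemma \ref{relation}, which trades the eigenvalue error against the (already quadratic) eigenvector error.
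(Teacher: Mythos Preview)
Your proposal is correct and follows essentially the same route as the paper: use Lemma \ref{eslonyk} for the Newton step, show that the normalization $\mathbf{u}_{k+1}=\widehat{\mathbf{u}}_{k+1}/\|\widehat{\mathbf{u}}_{k+1}\|$ is Lipschitz near $\mathbf{u}_*$, invoke Lemma \ref{relation} to control the Noda eigenvalue, and close with an induction. Two minor differences worth noting: you make the reduction to $\theta_k=1$ explicit via Lemma \ref{thetak}(ii), which the paper's proof leaves implicit, and you bound $\bigl|\|\widehat{\mathbf{u}}_{k+1}\|-1\bigr|$ by the reverse triangle inequality rather than the paper's route through the Lipschitz estimate \eqref{eq2.3} on the second component of $F$; both choices are slight simplifications but do not change the underlying argument.
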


\begin{proof}
For some $\delta\in (0, \min \mathbf{u}_{\ast})$, there are positive constants $c_1$, $%
c_2 $ and $c_3$ such that
\begin{equation}  \label{eq2.1}
\left\Vert \left[
\begin{array}{c}
\widehat{\mathbf{u}}_{k+1} \\
\widehat{\lambda }_{k+1}%
\end{array}%
\right] -\left[
\begin{array}{c}
\mathbf{u}_{\ast } \\
\lambda_{\ast}%
\end{array}%
\right] \right\Vert \leq c \left\Vert \left[
\begin{array}{c}
\mathbf{u}_{k} \\
{\lambda }_{k}%
\end{array}%
\right] -\left[
\begin{array}{c}
\mathbf{u}_{\ast } \\
\lambda_{\ast}%
\end{array}%
\right] \right\Vert^{2}
\end{equation}
whenever $\left\Vert \left[
\begin{array}{c}
{\mathbf{u}}_{k} \\
{\lambda }_{k}%
\end{array}%
\right] -\left[
\begin{array}{c}
\mathbf{u}_{\ast } \\
\lambda_{\ast}%
\end{array}%
\right] \right\Vert <\delta$,
\begin{equation}
\left |{\lambda }_{k}-\lambda_{\ast}\right | \le c_2 \|\mathbf{u}%
_{k}-\mathbf{u}_{\ast }\|  \label{eq2.2}
\end{equation}
whenever $\left\Vert {\mathbf{u}}_{k} -\mathbf{u}_{\ast } \right\Vert <\delta$, and
\begin{equation}
\left \| F(\widehat{\mathbf{u}}_{k+1},\widehat{\lambda}_{k+1} ) - F(\mathbf{u}_{\ast},\lambda_{\ast} ) \right \| \le c_3 \left\Vert \left[
\begin{array}{c}
\widehat{\mathbf{u}}_{k+1} \\
\widehat{\lambda }_{k+1}%
\end{array}%
\right] -\left[
\begin{array}{c}
\mathbf{x}_{\ast } \\
\lambda_{\ast}%
\end{array}%
\right] \right\Vert  \label{eq2.3}
\end{equation}
whenever $\left\Vert \left[
\begin{array}{c}
\widehat{\mathbf{u}}_{k+1} \\
\widehat{\lambda }_{k+1}%
\end{array}%
\right] -\left[
\begin{array}{c}
\mathbf{u}_{\ast } \\
\lambda_{\ast}%
\end{array}%
\right] \right\Vert <\delta$.
Note that $\mathbf{u}_k > 0$ is guaranteed. Now for all $\epsilon>0$ sufficiently small,assume that 
$\left\Vert \left[
\begin{array}{c}
{\mathbf{u}}_{k} \\
{\lambda }_{k}%
\end{array}%
\right] -\left[
\begin{array}{c}
\mathbf{u}_{\ast } \\
\lambda_{\ast}%
\end{array}%
\right] \right\Vert <\epsilon$ for $k=k_0$. By (\ref{eq2.1}) and (\ref{eq2.2}) we have (with $\epsilon \le \delta$)
\begin{equation*}
\|\widehat{\mathbf{u}}_{k+1} - \mathbf{u}_{\ast } \| 
\le c(1+c_2)^2\|{%
\mathbf{u}}_{k} - \mathbf{x}_{\ast } \|^2 \le  c(1+c_2)^2\epsilon^2.
\end{equation*}

By \eqref{eq2.3} and \eqref{eq2.1} we have (with $\epsilon \le \delta$, $c\epsilon^2 \le \delta$)
\begin{equation*}
\left |\frac{1}{2}\left( 1-\widehat{\mathbf{u}}_{k+1}^{T}\widehat{\mathbf{u}}_{k+1}\right) \right | \le c_3c(1+c_2)^2\|{\mathbf{u}}_{k} -
\mathbf{u}_{\ast } \|^2 \le  c_3c(1+c_2)^2\epsilon^2. 
\end{equation*}
Then $\Vert \widehat{\mathbf{u}}_{k+1} \Vert \ge \frac{1}{2}$ (with $c_3c(1+c_2)^2\epsilon^2 < \frac{3}{8}$).
Now
\begin{equation*}
\left |\|\widehat{\mathbf{u}}_{k+1}\| -1 \right | \le \frac{1}{\left |\|\widehat{\mathbf{u}}_{k+1}\| +1 \right | }2c_3c(1+c_2)^2\|{\mathbf{u}}_{k} -
\mathbf{u}_{\ast } \|^2\le \frac{4}{3}c_3c(1+c_2)^2\|{\mathbf{u}}_{k} -
\mathbf{u}_{\ast } \|^2 . 
\end{equation*}
Then 
\begin{eqnarray*}
\|{\mathbf{u}}_{k+1} - \mathbf{x}_{\ast } \|&=&\| {\mathbf{u}}_{k+1} - 
\widehat{\mathbf{u}}_{k+1} + \widehat{\mathbf{u}}_{k+1} - \mathbf{u}_{\ast }
\|\\
&\le &\| {\mathbf{u}}_{k+1} - \widehat{\mathbf{u}}_{k+1}\|+\| \widehat{%
\mathbf{u}}_{k+1} - \mathbf{u}_{\ast } \| \\
&=& \| \left ({\mathbf{u}}_{k+1} - \|\widehat{\mathbf{u}}_{k+1}\|{\mathbf{u%
}}_{k+1}\right ) \|+\| \widehat{\mathbf{u}}_{k+1} - \mathbf{u}_{\ast } \|
\\
&=&\left | \|\widehat{\mathbf{u}}_{k+1}\|-1\right | +\| \widehat{\mathbf{u}%
}_{k+1} - \mathbf{u}_{\ast } \| \\
&\le & \left (1+\frac{4}{3}c_3\right ) c(1+c_2)^2\|{\mathbf{u}}_{k} -
\mathbf{u}_{\ast } \|^2.
\end{eqnarray*}
For $\epsilon$ with $(1+\frac{4}{3}c_3) c(1+c_2)^2 \epsilon \le
\frac{1}{1+c_2}$, we have  $\|{\mathbf{u}}_{k+1} - \mathbf{u}_{\ast } \| < 
\frac{1}{1+c_2}\epsilon$ and thus  $\left |{{\lambda }}%
_{k+1}-\lambda_{\ast}\right | \le c_2 \|\mathbf{u}_{k+1}-\mathbf{u}_{\ast
}\| < \frac{c_2}{1+c_2}\epsilon$. Therefore, $\left\Vert \left[
\begin{array}{c}
{\mathbf{u}}_{k+1} \\
{\lambda }_{k+1}%
\end{array}%
\right] -\left[
\begin{array}{c}
\mathbf{u}_{\ast } \\
\lambda_{\ast}%
\end{array}%
\right] \right\Vert = \|{\mathbf{u}}_{k+1} - \mathbf{u}_{\ast } \|+
\left |{\lambda }_{k+1}-\lambda_{\ast}\right | < \epsilon$. We can then
repeat the above process to get $\|{\mathbf{u}}_{k+1} - \mathbf{u}_{\ast }
\| \le d \|{\mathbf{u}}_{k} - \mathbf{u}_{\ast } \|^2$ for all $k\ge k_0$
and $d = \left (1+\frac{4}{3}c_3\right ) c(1+c_2)^2$. Thus $\mathbf{u}_{k}$
converges to $\mathbf{u}_{\ast}$ quadratically and then ${\lambda }%
_{k}$ converges to $\lambda_{\ast}$ quadratically by \eqref{eq2.2}.

\end{proof}

\section{Numerical experiments}

\label{sec:exp} In this section, we present some numerical results to support
our theory for NNI and illustrate its effectiveness. All numerical tests were performed on 4.2GHz quad-core Intel Core i7  with $32$ GB memory using Matlab R$2018b $ with machine precision $\varepsilon =2.22\times 10^{-16}$ under the
macOS High Sierra. Throughout the experiments, the initial vector is $\mathbf{u}_{0}=\frac{1}{\sqrt{n}}[1,\ldots ,1]^{T}\in \mathbb{R}^{n}$. 
In the experiments, the stopping criterion for NNI is the relative residual
\begin{equation*}
\frac{\Vert \mathcal{A}(\mathbf{u}_k)-\lambda_k\mathbf{u}_k\Vert} {%
(\|\mathcal{A}(\mathbf{u}_k)\|_1\|\mathcal{A}(\mathbf{u}_k)\|_{\infty})^{1/2}}  \le 10^{-12},
\end{equation*}
where we use the cheaply computable $(\|\mathcal{A}(\mathbf{u}_k)\|_1\|\mathcal{A}(\mathbf{u}_k)\|_{\infty})^{1/2}$
to estimate the 2-norm $\|\mathcal{A}(\mathbf{u}_k)\|$, which is more reasonable
than the individual $\|\mathcal{A}(\mathbf{u}_k)\|_1$ or $\|\mathcal{A}(\mathbf{u}_k)\|_{\infty}$ with $\|\cdot\|_{\infty}$
the infinity norm of a matrix.

\begin{example}
\label{exp:FEM} Consider the finite-difference discretization of the nonlinear eigenvalue problem (\ref{eq:NSLE}) with Dirichlet boundary conditions on $[0,1]\times [0,1]$, i.e.,
\begin{eqnarray*}
A\mathbf{u}+\Gamma \mathrm{diag} \left (\mathbf{e}-\frac{\mathbf{e}}{\mathbf{%
a}+\mathbf{u}^{[2]}}\right ) \mathbf{u}=\lambda \mathbf{u}, 
\end{eqnarray*}%
where $A  \in  \mathbb{R}^{n \times n}$ is a negative 2D Laplacian matrix with Dirichlet boundary conditions.
\end{example}

 For Example~\ref{exp:FEM}, Figure~\ref{fig:rand} depicts how the relative residual evolves versus the
number of iterations for NNI.  It shows that NNI uses $8$ iterations to achieve the required accuracy, clearly indicating its quadratic convergence.
\begin{figure}[hbtp]
\centering
\epsfig{file=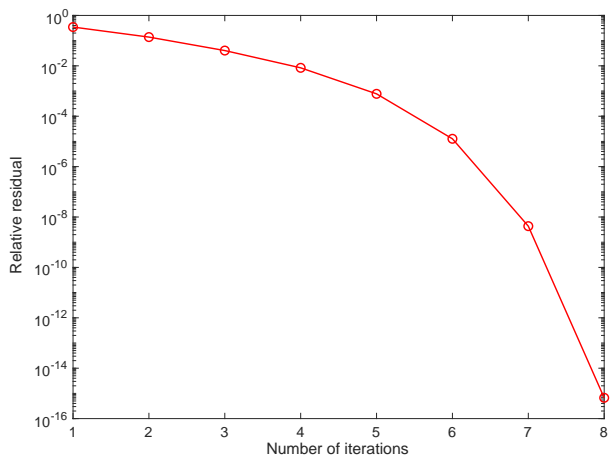,width=.7\textwidth}
\caption{The relative residual versus the number of iterations for $n=100, \Gamma=10$ and $1>\mathbf{a}>0$.}
\label{fig:rand}
\end{figure}
Figure~\ref{fig:gamma} shows that the magnitude of parameter $\Gamma$ affects the total number of iterations to achieve convergence. As we see, NNI requires more iterations to achieve convergence for lager parameter $\Gamma$.
\begin{figure}[hbtp]
\centering
\epsfig{file=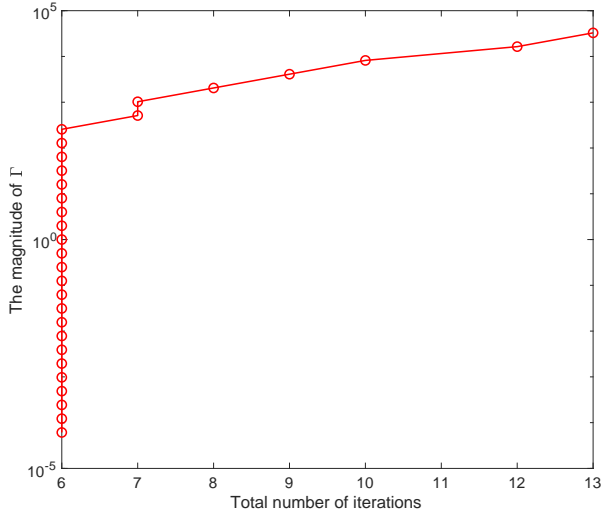,width=.7\textwidth}
\caption{The magnitude of $\Gamma$ versus the total number of iterations for $n=2500$ and $\mathbf{a}\ge1$.}
\label{fig:gamma}
\end{figure}

Table~\ref{tbl:outer} reports the results obtained by NNI.
In the table, $n$ specifies the dimension, $\mathbf{a}$ is a
parameter to adjust the diagonal elements of $\mathrm{diag} \left (\mathbf{e}-\frac{\mathbf{e}}{\mathbf{a}+\mathbf{u}^{[2]}}\right )$. \textquotedblleft $\mathbf{a}\ge1$\textquotedblright  denotes that each element of $\mathbf{a}$ is larger than $1$, \textquotedblleft $1>\mathbf{a}>0$\textquotedblright  denotes that each element of $\mathbf{a}$ is between $0$ and $1$, \textquotedblleft $\mathbf{a}>0$\textquotedblright  denotes that each element of $\mathbf{a}$ is larger than $0$.
\textquotedblleft Iter\textquotedblright denotes the number of iterations
to achieve convergence, \textquotedblleft Residual\textquotedblright
denotes the relative residual when NNI is terminated. From the table, we see that the number of iterations for NNI is at most $23$, clearly indicating its rapid convergence. For this example, $\mathbf{h}_{k}(\theta_k)>0$ holds with $\theta_k = 1$ for each iteration of NNI and the halving procedure is not used. These results indicate that our theory of NNI can be conservative.

\begin{table}[htbp]
\caption{Numerical results for NNI}
\label{tbl:outer}\centering
\begin{tabular}{rrlrll}
\hline
Parameters &  &  & NNI      \\
\cline{1-2}\cline{4-5}
$n$ & $\mathbf{a} $ &  & Iter & 	Residual     \\
\hline
$2500$ & $\mathbf{a}\ge1$ &  & $6$ & 2.52e-16     \\
$10000$ & $\mathbf{a}\ge1$ &  & $6$ & 2.38e-16     \\
$40000$ & $\mathbf{a}\ge1$ &  & $6$ & 2.79e-16     \\  \hline

$2500$ & $1>\mathbf{a}>0$ &  & $13$ & 2.15e-16     \\
$10000$ & $1>\mathbf{a}>0$ &  & $16$ & 2.25e-16     \\
$40000$ & $1>\mathbf{a}>0$ &  & $23$ & 2.50e-16     \\ \hline

$2500$ & $\mathbf{a}>0$ &  & $13$ & 1.98e-16     \\
$10000$ & $\mathbf{a}>0$ &  & $15$ & 1.34e-15     \\ 
$40000$ & $\mathbf{a}>0$ &  & $21$ & 7.04e-16     \\ \hline
\end{tabular}%
\end{table}

\section{Conclusion}
In this paper, we are concerned with the nonlinear algebraic eigenvalue problem (NAEP) generated by the discretization of the saturable nonlinear Schr\"odinger equation. Based on the idea of Noda's iteration and Newton's method, we have proposed an effective method for computing the positive eigenvectors of NAEP, called Newton--Noda iteration. It involves the selection of a positive parameter $\theta_k$ in the $k$th iteration. We have presented a halving procedure to determine the parameters $\theta_k$, starting with $\theta_k=1$ for each iteration, such that the sequence approximating target eigenvalue $\lambda_{\ast}$ is strictly monotonic increasing and bounded, and thus its global convergence is guaranteed for any initial positive unit vector. Additionally, another advantage of the presented method is its local convergence speed. We have shown that the parameter $\theta_k$ is chosen eventually equal to $1$ and locally quadratic convergence is achieved. The numerical experiments have indicated that the halving procedure will often return $\theta_k=1$ (i.e., no halving is actually used) for each $k$, and near convergence the halving procedure will always return $\theta_k=1$. These results confirm our theory and demonstrate that our theoretical results can be realistic and pronounced.
 

This iterative method has several nice features:  Structure Preserving--It maintains positivity in its computation of positive ground state vectors, and its convergence is global and quadratic. Easy-to-implement --The structure of the new algorithm is still very simple, although its convergence analysis is rather involved for nonlinear algebraic eigenvalue problems. On the other hand, it gives an alternative approach to approximate the solution of the nonlinear Schrödinger equation by constructing a sequence. This is precisely the way we use to prove the existence of solutions of the discrete nonlinear Schr\"odinger equation.

\section*{Acknowledgements}
I am very grateful to the Ministry of Science and Technology in Taiwan for funding this research, and I would like to thank Chun-Hua Guo, Wen-Wei Lin and Tai-Chia Lin for their valuable comments on this paper.

\end{document}